\begin{document}
\setstcolor{red}
\title[A polytopal generalization of Apollonian packings and Descartes' theorem]{A polytopal generalization of Apollonian packings and Descartes' theorem} 

\thanks{$^\dagger$ Partially supported by IEA-CNRS.\\
$^*$ Partially supported by CNRS and the Austrian Science Fund (FWF), projects F-5503 and P-34763}
\author[Jorge L. Ram\'irez Alfons\'in]{Jorge L. Ram\'irez Alfons\'in$^\dagger$}
\address{IMAG Univ.\ Montpellier, CNRS, France}
\email{jorge.ramirez-alfonsin@umontpellier.fr}
\author[Iv\'an Rasskin]{Iv\'an Rasskin$^*$}
\address{LIS, Aix-Marseille Université, CNRS, France}
\email{ivan.rasskin@lis-lab.fr}

\subjclass[2010]{51M20, 52C26, 05B40}

\keywords{Apollonian packings, Descartes' theorem, Polytopes, Arithmetic groups, Platonic solids}

\begin{abstract} 
We present a generalization of Descartes' theorem for the family of polytopal sphere packings arising from uniform polytopes. The corresponding quadratic equation is expressed in terms of geometric invariants of uniform polytopes which are closely connected to canonical realizations of edge-scribable polytopes. We use our generalization to construct integral Apollonian packings based on the Platonic solids. Additionally, we also introduce and discuss a new spectral invariant for edge-scribable polytopes. 
\end{abstract}

\maketitle

\section{Introduction}

	Apollonian packings and their generalizations have proven to be effective tools for studying various structures in natural science. They also appear in many branches of mathematics, such as geometric group theory \cite{zhang2023elementary}, fractal geometry \cite{mandelbrot2004fractals}, discrete geometry \cite{chen2016}, knot theory \cite{RR2024links}, and number theory \cite{apoNumber} (see \cite{sheard2020on} for an excellent survey).  The process to construct them begins with an initial configuration of four pairwise tangent circles on the plane. By adding the inscribed circle to the interstice between each triple of circles and repeating this process ad infinitum, we obtain an \textit{Apollonian packing of circles}, as illustred in Figure \ref{fig:tetrahedral}.
	
	\begin{figure}[H]
		\centering
		\includegraphics[width=.48\textwidth]{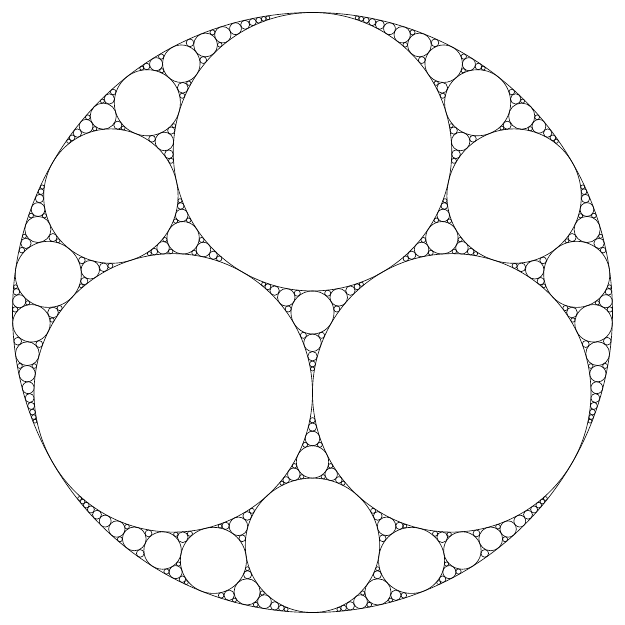}
		\caption{An Apollonian packing.}   
		\label{fig:tetrahedral}
	\end{figure}
	
	The connection  with number theory was noted by Soddy in \cite{soddy1936} and relies on the existence of \textit{integral} Apollonian packings, where the bends\footnote{also called curvatures} of the circles (the reciprocals of the signed radii) are all integers (see Figure \ref{fig:apoint}). 
	
	\begin{figure}[H]
		\centering
		\includegraphics[width=.48\textwidth]{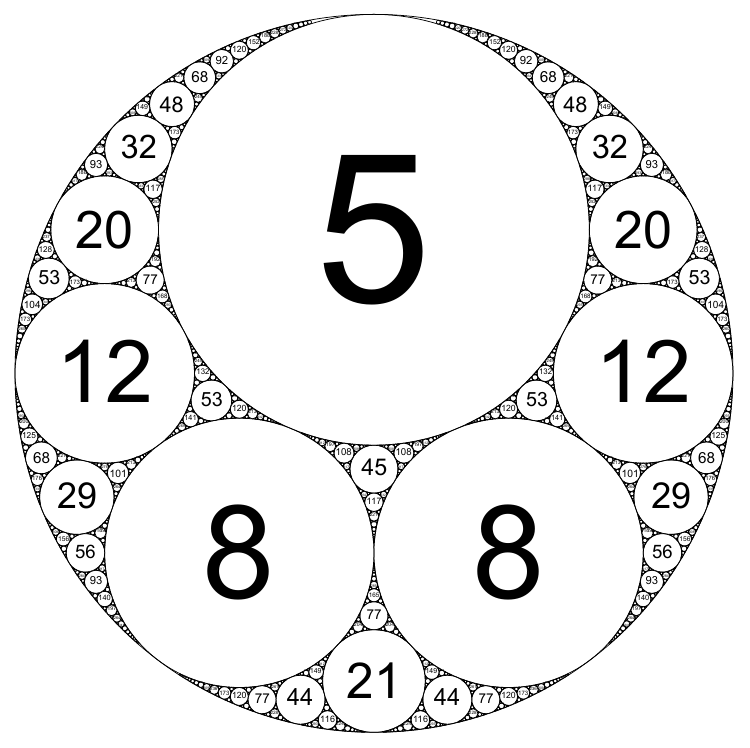}\hspace{.2cm}
		\includegraphics[width=.48\textwidth]{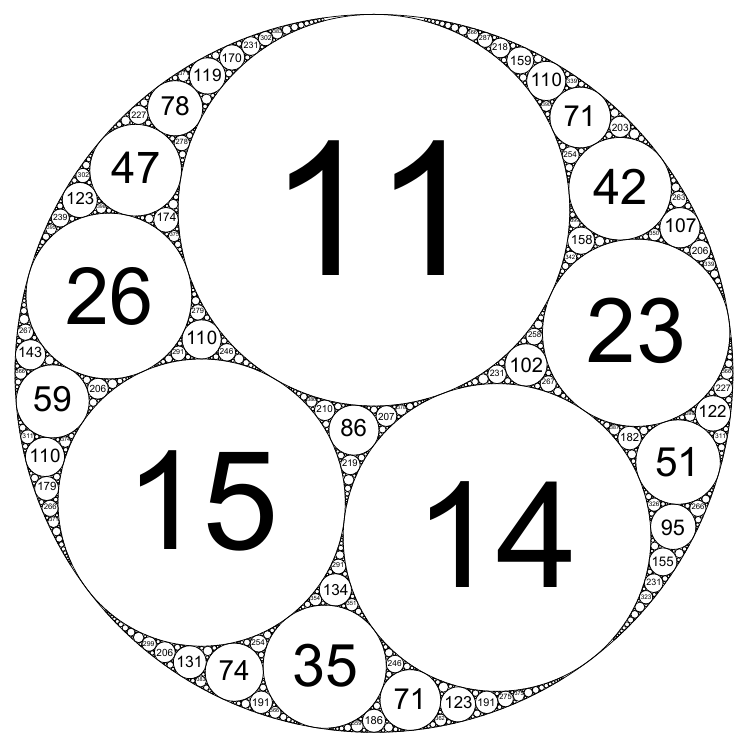}
		\caption{Two integral Apollonian packings. The labels are the bends, and the bends of the outer circles are $-3$ (left) and $-6$ (right).}
		\label{fig:apoint}
	\end{figure}

Soddy's observation arises from an old algebraic relation involving the bends of four pairwise tangent circles on the plane. This relation, known as \textit{Descartes' theorem}, was documented in the correspondence between Descartes and the Princess Elizabeth of Bohemia around 1643 \cite{bosDescartesBohemia}.

\begin{thm}[Descartes] The bends of four pairwise tangent circles on the plane satisfy
	\begin{align}\label{eq:DescartesCircleTh}
		(b_1+b_2+b_3+b_4)^2= 2(b_1^2+b_2^2+b_3^2+b_4^2).
	\end{align}
\end{thm}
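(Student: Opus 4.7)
The plan is to prove the theorem using the Lorentzian model of inversive geometry, which fits naturally into the framework the paper will develop. The key idea is to lift each disk in $\mathbb{R}^2$ to a vector in the Lorentzian space $\mathbb{L}^{3,1}$, exploit the fact that tangency translates into a simple inner product relation, and recover the curvature as a specific linear functional.

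First, I would assign to each oriented disk $D_i$ with center $c_i$ and signed curvature $\kappa_i$ an inversive coordinate vector $v_i \in \mathbb{L}^{3,1}$, normalized so that $\langle v_i, v_i \rangle = 1$ with respect to the Lorentzian form of signature $(3,1)$. Two basic facts are to be verified: the inversive distance between two disks equals $\langle v_i, v_j \rangle$, so external tangency corresponds to $\langle v_i, v_j \rangle = -1$; and the curvature is recovered as $\kappa_i = \langle v_i, e \rangle$ for a distinguished null vector $e \in \mathbb{L}^{3,1}$ representing the point at infinity.

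Second, for four pairwise externally tangent disks, the Gram matrix is $G = 2 I_4 - J_4$, where $J_4$ is the $4 \times 4$ all-ones matrix. This matrix has eigenvalues $2$ (with multiplicity $3$) and $-2$, matching the signature $(3,1)$ of the ambient space; in particular $G$ is invertible and $\{v_1, v_2, v_3, v_4\}$ forms a basis of $\mathbb{L}^{3,1}$. Expanding $e = \sum_i \alpha_i v_i$ in this basis and using $\kappa_j = \langle e, v_j \rangle$ yields $\alpha = G^{-1} \kappa$, so
\[
\langle e, e \rangle \;=\; \kappa^{T} G^{-1} \kappa.
\]
Since $e$ is null, the left-hand side vanishes. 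A direct inversion gives $G^{-1} = \tfrac{1}{2} I_4 - \tfrac{1}{4} J_4$, hence the right-hand side equals $\tfrac{1}{2} \sum_i \kappa_i^2 - \tfrac{1}{4} \bigl(\sum_i \kappa_i\bigr)^2$; setting this to zero yields Descartes' identity.

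The main obstacle is the initial setup: defining the inversive coordinates so that the two identities above hold simultaneously, and in particular verifying that the curvature depends \emph{linearly} on the lifted vector via pairing with $e$. Once this lift is in place, the rest reduces to elementary linear algebra on a $4 \times 4$ matrix. This Lorentzian perspective has the further advantage of generalizing transparently to higher dimensions and to more general tangency configurations, which is precisely the viewpoint the paper intends to develop.
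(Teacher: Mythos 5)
Your proof is correct, and it is essentially the approach this paper itself relies on: lifting the disks to normalized space-like vectors of $\ed$, pairing with the null vector $\mathbf{x}_N$ to get curvatures, and evaluating $\kappa^T\Gram^{-1}\kappa=0$ is exactly Lemma \ref{lem:klemma} together with Boyd's observation (see the remark following that lemma), specialized to $d=2$, which yields Theorem \ref{thm:soddy-gosset} and hence the Descartes identity.
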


Both Apollonian packings and Descartes’ theorem have been widely generalized. For example, the higher-dimensional analogues of Descartes' theorem for configurations of $d+2$ pairwise tangent spheres in $d$-dimensional Euclidean space were provided by Soddy for $d=3$  \cite{soddy1936} and by Gosset for every $d\ge2$ \cite{gosset}. Another type of generalization in the plane involves modifying the initial configuration of circles according to the vertices of a polyhedron whose edges are tangent to the unit sphere. This approach leads to Apollonian packings derived from the tetrahedron, but it can also be used to obtain the \textit{octahedral} \cite{boyd,guettler}, \textit{cubic} \cite{stange2015bianchi}, or \textit{icosahedral} \cite{aporingpacks} analogues of Apollonian packings. Each of these works has its own generalization of Descartes' theorem and can traced back to the most general version discussed by Boyd in \cite{boyd}. These generalizations fall under the family of \textit{polyhedral crystallographic packings} described in \cite{KontorovichNakamura}. In this paper, we explore a related family that we term \textit{polytopal sphere packings}. The connection between polytopes and sphere packings has been previously investigated in the works of Boyd \cite{boyd}, Maxwell \cite{maxwell},  Eppstein, Kuperberg and Ziegler \cite{eppstein2002}, Chen \cite{chen2016,chen2016even} and Chen and Labbé \cite{chenlabbe}. Our main contribution is the following generalization of Descartes' theorem for the class of polytopal sphere packings arising from uniform polytopes in every dimension.

\begin{thm}
	Let $\mathcal S_\P$ be a polytopal sphere packing where $\P$ is a uniform $(d+1)$-polytope with $d\ge2$. The polytopal curvatures of $\mathcal S_\P$ with respect to the faces in any flag $(f_0,\ldots,f_d,f_{d+1}=\P)$ satisfy
	\begin{align}\label{eq:poldesth0}
		(\kappa_{f_0}-\kappa_{f_1})^2+\ell_{f_2}^2(\kappa_{f_1}-\kappa_{f_2})^2+\sum_{i=2}^{d}\frac{1}{\ell_{f_{i+1}}^{-2}-\ell_{f_i}^{-2}}(\kappa_{f_i}-\kappa_{f_{i+1}})^2=\ell_\P^2\kappa_{\P}^2
	\end{align}
\end{thm}

One of the special features of this generalization is that the quadratic equation \eqref{eq:poldesth0}, which relates the notions of \textit{canonical lengths} $\ell_{f_i}$ with \textit{polytopal curvatures} $\kappa_{f_i}$, is expressed in terms of the geometry of the underlying polytope. Furthermore, the equation we derive encompasses several generalizations of Descartes' theorem and allows us to  determine the integrality conditions based solely on the combinatorial information of the polytope, without needing the coordinates of an initial packing. This approach is exemplified by our analysis of the truncated tetrahedron shown in Figure \ref{fig:apotruncs}.


\begin{figure}[H]
	\centering
	\includegraphics[width=.48\textwidth]{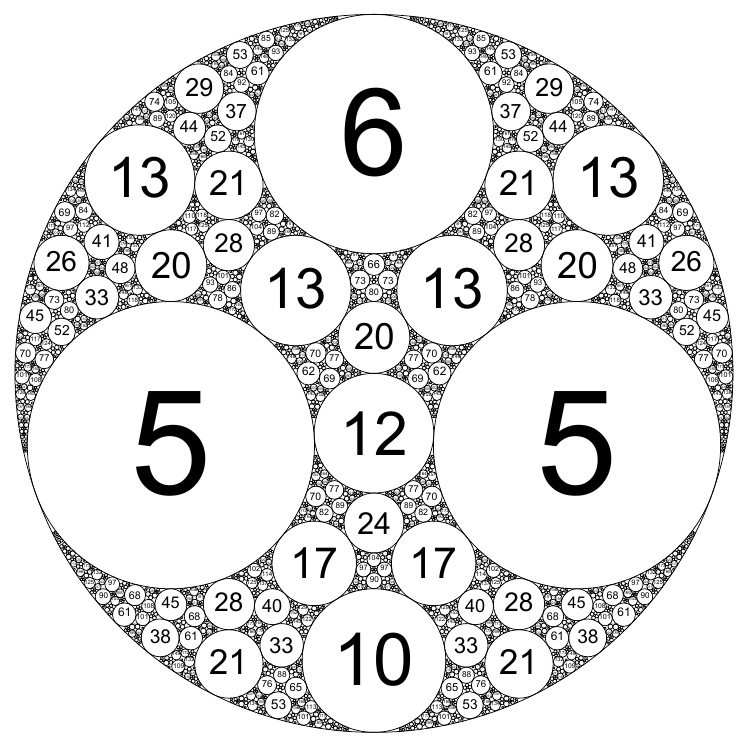}\hspace{.2cm}	\includegraphics[width=.48\textwidth]{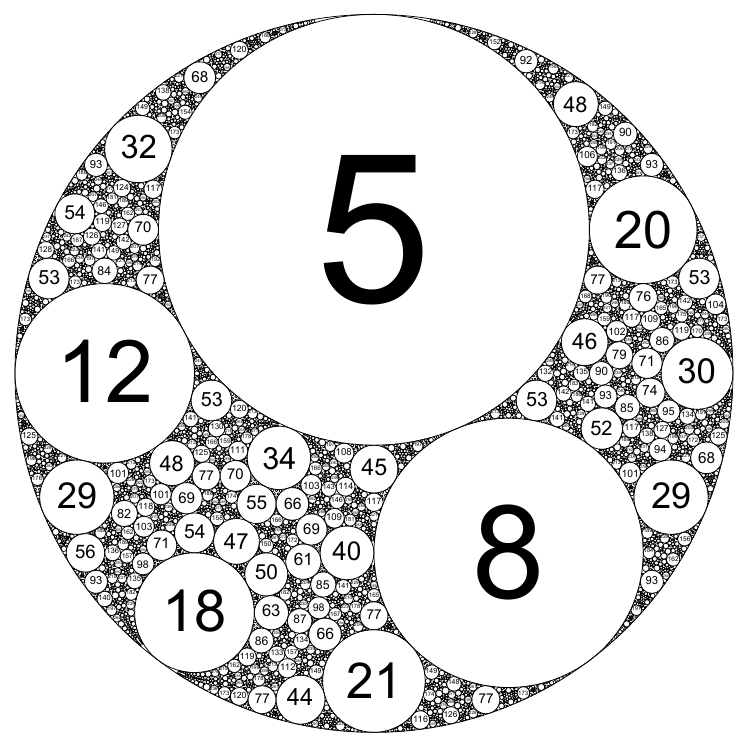}
	\caption{Two polyhedral crystallographic packings based on the cube (left) and the truncated tetrahedron (right).}
	\label{fig:apotruncs}
\end{figure}

We apply our main theorem to study the crystallographic packings induced by the Platonic solids and rediscover their integral structure under a unified Descartes' theorem expressed in terms of the corresponding Schläfli symbol (Proposition \ref{prop:platonicdesth}). In higher dimensions, the second author classify all the crystallographic sphere packings induced by regular polytopes in \cite{rasskin2024regular} and used our main theorem to examine their integrality. Beyond regular polytopes, our main theorem also applies to uniform polytopes, and we are confident it could be extended to other families, such as \textit{quasi-uniform polytopes}. In dimension $3$, there is only one quasi-uniform polyhedron that is not uniform: the 37th Johnson solid  \cite{grunbaum1967convex}. We have verified that Theorem \ref{thm:poldescartes} holds for this polyhedron.

\subsection{Organization of the paper}
In Section \ref{sec:polytopalball}, we provide the theoretical background on the Lorentzian and affine model of the space of spheres, and polytopes, which are essential for understanding the concepts of polytopal sphere packings. We then explore various aspects and endowed structures of these packings, including their dual arrangement, the full symmetry group, the Apollonian arrangement, and their uniqueness under Möbius transformations. Additionally, we introduce a novel spectral invariant of edge-scribable polytopes, which we term the {\em Möbius spectrum}.

\medskip

 In Section \ref{sec:DescartesRegPol}, we prove the main theorem (Theorem \ref{thm:poldescartes}). Section \ref{sec:platonic} applies this theorem to derive formulas for Descartes' theorem (Proposition \ref{prop:platonicdesth}), integral matrix representations of the full symmetry groups (Proposition \ref{prop:groups}), integrality conditions (Corollary \ref{cor:integrality}), and parametrizations of the integral Platonic crystallographic packings (Corollary \ref{cor:parametrization}), in terms of the Schläfli symbol for each Platonic solid. 
 
 \medskip
 The Appendix contains a catalogue of the Platonic crystallographic packings, detailing all the properties discussed in this paper.

\subsection{Acknowledgements}
We would like to thank the referee for his valuable insights and constructive criticisms,
which have greatly enhanced the presentation of this work.

\section{Polytopal sphere packings: background}\label{sec:polytopalball}
For any $d,m,n\in \mathbb N$ with $d=m+n$, let $\mathbb R^{m,n}$ denote the real vector space of dimension $d$, equipped with an inner product $\langle\cdot,\cdot\rangle$ of signature $(m,n)$. The \textit{unit sphere} $\mathbb S(\mathbb R^{m,n})$ is the space $\{\mathbf x\in\mathbb R^{m,n}\,|\, \langle \mathbf x, \mathbf x\rangle=1\}$.
We will use $\mathbb R^d$ and $\mathbb S^d$ to refer to $\mathbb R^{d,0}$ and $\mathbb S(\mathbb R^{d+1})$, respectively. An \textit{oriented hypersphere}, or simply a \textit{sphere}, in $\wrd:=\mathbb{R}^d\cup\{\infty\}$, is the image of a spherical cap on $\mathbb S^d$ under stereographic projection. Depending on the position of the North Pole of $\mathbb S^d$ relative to the spherical cap, there are three types of spheres: \textit{solid sphere} (positive radius), \textit{hollow sphere} (negative radius), or \textit{half-space} (infinite radius). 

\medskip

Each sphere $S$ is uniquely determined by its centre $c\in\wrd$ and its bend $b\in\mathbb R$ (the inverse of the signed radius). If $S$ is a half-space, it is determined by its unit normal vector $\widehat n\in \mathbb R^d$, which points to the interior, and the signed distance $\delta\in\ru$ between its boundary and the origin. 

\subsection{The Lorentzian model of the space of spheres} The space $\mathbb R^{d,1}$, along with its corresponding inner product, is referred to as the \textit{Lorentzian space} and the \textit{Lorentzian product} in dimension $d+1$, respectively.  A vector  $X=(x_1,\ldots,x_{d+1})^\top\in\mathbb R^{d,1}$ is \textit{future-directed} (resp. \textit{past-directed)} if $x_{d+1}>0$ (resp. $x_{d+1}<0$). There is a well-known bijection between the space of spheres of $\wrd$ and the Lorentzian unit sphere $\mathbb S(\mathbb R^{d+1,1})$ (see \cite{wilker} as well as \cite[Section 2]{RR20} for further details on this bijection). For any sphere $S$ of $\wrd$, we denote by $X_S\in\mathbb S(\mathbb R^{d+1,1})$ the Lorentzian vector corresponding to $S$. The \textit{inversive product }of two spheres is defined as $\langle S,S' \rangle:=\langle X_S, X_{S'}\rangle$. If $X_S, X_{S'}$ are not both past-directed then
\begin{equation}\label{eq:lprod}
	\langle S,S'\rangle
	\begin{cases}
		<-1&\text{if } S\cap S'=\emptyset, \\
		=-1&\text{if  $\partial S$ and $\partial S'$ are tangent and } \mathrm{int}(S)\cap \mathrm{int}(S')=\emptyset,  \\
		=0 \quad&\text{if  $\partial S$ and $\partial S'$ are orthogonal,}  \\
		=1&\text{if  $\partial S$ and $\partial S'$ are tangent and }  S\subseteq S'\text{ or }S'\subseteq S,  \\
		>1&\text{if } \partial S\cap \partial S'=\emptyset\text{ and } S\subset S'\text{ or }S'\subset S.  \\
	\end{cases}
\end{equation}  

The \textit{inversive coordinates} of $S$ correspond to the coordinates of $X_S$, which are given by the $(d+2)$-dimensional real vector
\begin{align}\label{eq:invcoord}
	X_S=
	\begin{cases}
		(bc,\dfrac{\overline b-b}{2},\dfrac{\overline b+b}{2})^\top&\text{ if }b\not=0,  \\
		\quad\\
		(\widehat n,\delta,\delta)^\top&\text{otherwise}. \\
	\end{cases}
\end{align}
where $\overline{b}=b\|c\|^2-\frac{1}{b}$ is the \textit{co-bend} of $S$. The co-bend is the bend of $S$ after inversion through the unit sphere.
With this coordinate system, the inversive product is obtained by 
\begin{align}\label{eq:invprodmatrix}
	\langle S,S'\rangle = X_S^\top \mathbf{Q}_{d+2} X_{S'}
\end{align}
where $\mathbf Q_{d+2}=\mathrm{diag}(1,\ldots,1,-1)$. The group of Möbius transformations of $\wrd$ preserves the inversive product and acts linearly on the inversive coordinates as an orthogonal subgroup of $\mathrm {SL}_{d+2}(\mathbb R)$ with respect to $\mathbf Q_{d+2}$. An \textit{arrangement} of spheres $\mathcal A=(S_1,S_2,\ldots)$ of $\wrd$, possibly infinite, is a \textit{packing} if their interiors are mutually disjoint. The \textit{Gramian} of a finite arrangement $\mathcal A=(S_1,\ldots,S_n)$ is the matrix $\mathrm{Gram}(\mathcal{A})=(\langle S_i,S_j\rangle)_{1\le i,j\le n}$.  

\subsection{The affine model of the space of spheres.}

The Lorentzian unit sphere  $\mathbb S(\mathbb R^{d+1,1})$ can be regarded within the \textit{oriented projective space} $\mathbb{P}_+\ed=\{X\in\ed\setminus 0\}/_\sim$,
where $X\sim Y$ if there is a real number $\lambda>0$ such that $X=\lambda Y$. This space $\mathbb{P}_+\ed$ is in bijection with the Euclidean unit sphere $\mathbb{S}^{d+1}\subset\ed$ which, under the gnomonic projection, becomes the union of two affine hyperplanes $\Pi_{\pm1}=\{x_{d+2}=\pm1\}$, both of which can be identified with $\eud$, along with $\Sigma_0=\{(X,0)\mid X \in \mathbb{S}^{d+1}\}$. The composition of the isomorphism, which maps the space of spheres of $\wrd$ to $\mathbb{S}(\ed)$, with the projection
\begin{align*}
	\begin{array}{ccc}
		\mathbb{S}(\ed)&\longrightarrow& \Pi_1\cup\Sigma_0\cup\Pi_{-1}\\
		X&\longmapsto&\left\lbrace\begin{array}{ccc}
			X&\text{if }x_{d+2}=0\\
			\frac{1}{|x_{d+2}|} X&\text{otherwise}
		\end{array}\right.
	\end{array}	
\end{align*}
provides an isomorphism between the space of spheres of $\wrd$ and $
\Pi_1^\circ\cup \Sigma_0\cup\Pi_{-1}^\circ$, where $\Pi_{\pm1}^\circ$ is the set of points in 
$\Pi_{\pm1}$ whose Euclidean norm is greater than 1. Such a point will be referred to as an \textit{outer point} of $\eud$. We call $
\Pi_1^\circ\cup \Sigma_0\cup\Pi_{-1}^\circ$ the \textit{affine model of the space of spheres}. In this manner, we can construct a bijection between the set of spheres whose Lorentzian vector is future-directed and the set of outer points of $\eud$. The reciprocal bijection between an outer point $v\in\eud$ and a sphere $S_v$ of $\wrd$ can be obtained geometrically by positioning a  \textit{light source} which illuminates $\sd$ from $v$. The illuminated region on $\sd$ is the spherical cap $\{u\in\mathbb R^{d+1}\mid u\cdot v\ge1\}\cap\mathbb S^d$, where $\cdot$ denotes the Euclidean inner product of $\eud$. We denote by $S_v$ the sphere obtained by the stereographic projection of the illuminated region of $v$, and call it the \textit{stereographic sphere} of $v$. The \textit{Lorentzian vector} of $v$ is defined as $X_v:=X_{S_v}$ (see Figure \ref{fig:stereo}). 

\begin{figure}[H]
	\centering
	\includestandalone[scale=1]{tikzs/lightsource5} 
	\vspace{-1.5cm} 
	\caption{(Left) An outer point of $\eud$ and its stereographic sphere in $\wrd$; (right) same setting in the affine model of the space of spheres, together with the corresponding Lorentzian vector.}  
	\label{fig:stereo}
\end{figure}

The Lorentzian vector of $v$ can be derived from $v$ using the following equation 
\begin{equation} \label{affinecoordinates}
	X_{v}=(\|v\|^2-1)^{-1/2}(v,1)^\top
\end{equation} 
This equation implies that for any two spheres, whose Lorentzian vectors are future-directed, their inversive product is related to the Euclidean inner product of their corresponding light sources via the following formula
\begin{equation}\label{eq:prodlorentztoeuclid}
	\langle S_u,S_v\rangle =((\|v\|^2-1)(\|v\|^2-1))^{-1/2}(u\cdot v-1).
\end{equation}


\subsection{Polytopes} 

We recall some basic notions of polytopes needed for the rest of the paper. We refer the reader to \cite{schulte04} for further details. A $d$\textit{-polytope} $\P$ is the convex hull of a finite collection of points in $\rd$. A $2$-polytope and a $3$-polytope are usually called \textit{polygon} and \textit{polyhedron}, respectively. For every $0\le k\le d$, we denote by $F_k(\P)$ the set of $k$-faces of $\P$ and by $\mathcal{F}(\P)=\{\emptyset\}\cup\bigcup_{k=0}^{d}F_k(\P)$ where $F_d(\P)=\{\P\}$.  The elements of $V(\P):=F_0(\P)$, $E(\P):=F_1(\P)$, $F_{d-2}(\P)$ and $F_{d-1}(\P)$ are called \textit{vertices}, \textit{edges}, \textit{ridges} and \textit{facets} of $\P$, respectively. The \textit{graph of} $\P$ is the graph induced by the vertices and the edges of $\P$. The \textit{face lattice} $(\mathcal F(\P),\subset)$ encodes all the
combinatorial information about $\P$. A \textit{flag} of $\P$ is a sequence of faces $\Phi=(f_0,f_1,\ldots,f_{d-1},f_d=\P)$ where for each $k=0,\ldots,d-1$,  $f_k\subset f_{k+1}$. Two polytopes $\P$ and $\P'$ are combinatorially equivalent if there exists an isomorphism between their face lattices, and one is said to be a \textit{realisation} of the other.

\medskip

The \textit{polar} of a subset $\Omega\subset\rd$  is defined as $\Omega^*=\{u\in\rd\mid u\cdot v\leq 1\text{ for all }v\in \Omega\}$. If $\mathcal{P}$ is a  $d$-polytope containing the origin in its interior, then  $\mathcal{P}^*$ is also a $d$-polytope containing the origin in its interior and holds the dual relation $(\mathcal{P}^*)^*=\mathcal{P}$. There is a bijection between $\mathcal{F}(\P)$ and $\mathcal{F}(\P^*)$ which reverses incidences and maps every facet $f$ of $\P$ to a vertex $v_f$ of $\P^*$. For every vertex $u\in f$, one has 
\begin{align}\label{eq:facepolar}
	u\cdot v_f=1.
\end{align}

For every $0\leq k\leq d-1$, a $d$-polytope $\mathcal{P}$ is $k$-\textit{scribed} if all its $k$-faces are tangent to the unit sphere $\mathbb S^{d-1}\subset\eud$. A $k$-scribed $d$-polytope is called \textit{inscribed}, \textit{edge-scribed}, \textit{ridge-scribed} and \textit{circumscribed} if $k=0,1,d-2,d-1$ respectively. A $d$-polytope is said to be \textit{$k$-scribable} if it admits a realisation which is $k$-scribed. If $\P$ is a $d$-polytope containing the origin in its interior then $\P$ is $k$-scribed if and only if $\P^*$ is $(d-k-1)$-scribed \cite{chenpadrol}. Concerning edge-scribability, all $d$-polytopes are edge-scribable for $d=2,3$ \cite{bright-sch}. In dimension $d\ge4$, there are examples of non-edge-scribable polytopes \cite{schulte87}.

\medskip

An edge-scribed $d$-polytope is said to be \textit{canonical} \cite{ziegler2012lectures} if the barycentre of all its tangency points with $\mathbb S^d$ is the origin. It follows from the work of Springborn in \cite{springborn2005unique}, that for any edge-scribable polytope $\P$, there is a unique canonical realisation $\P_0$, up to Euclidean isometries.  For $3$-polytopes, canonical realisations are also called \textit{Springborn realisations} \cite{belotti2022algebraic}. 
\begin{figure}[H]
	\includegraphics[width=.3\linewidth]{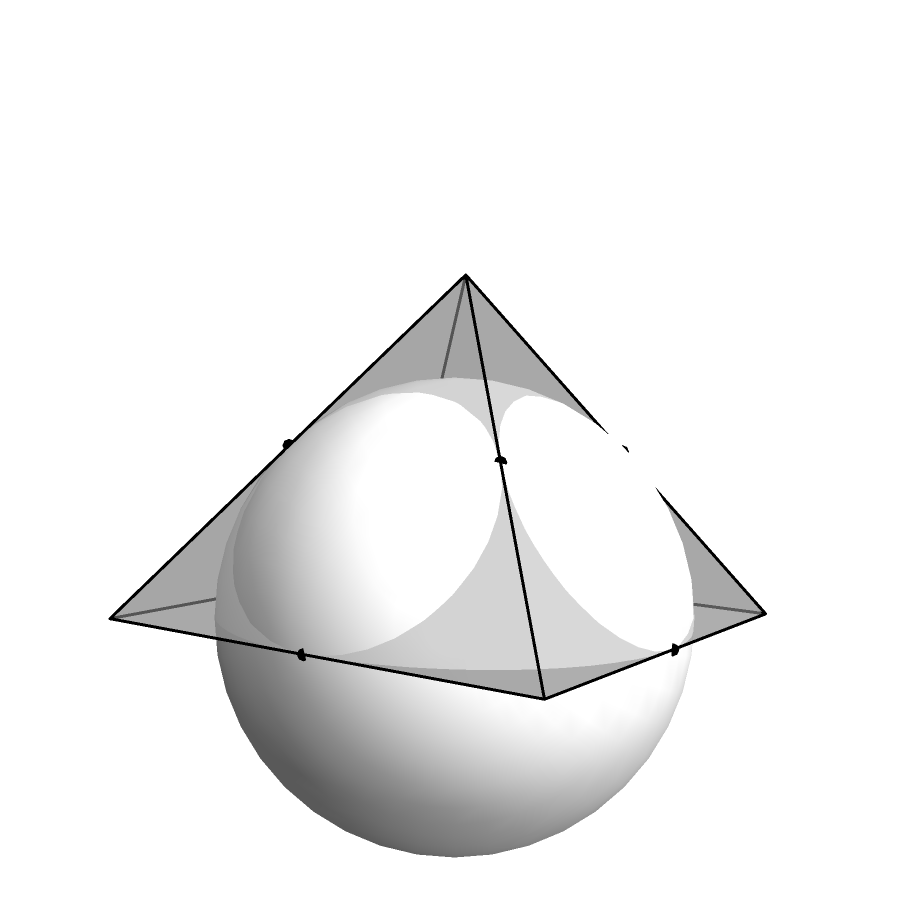}	\includegraphics[width=.3\linewidth]{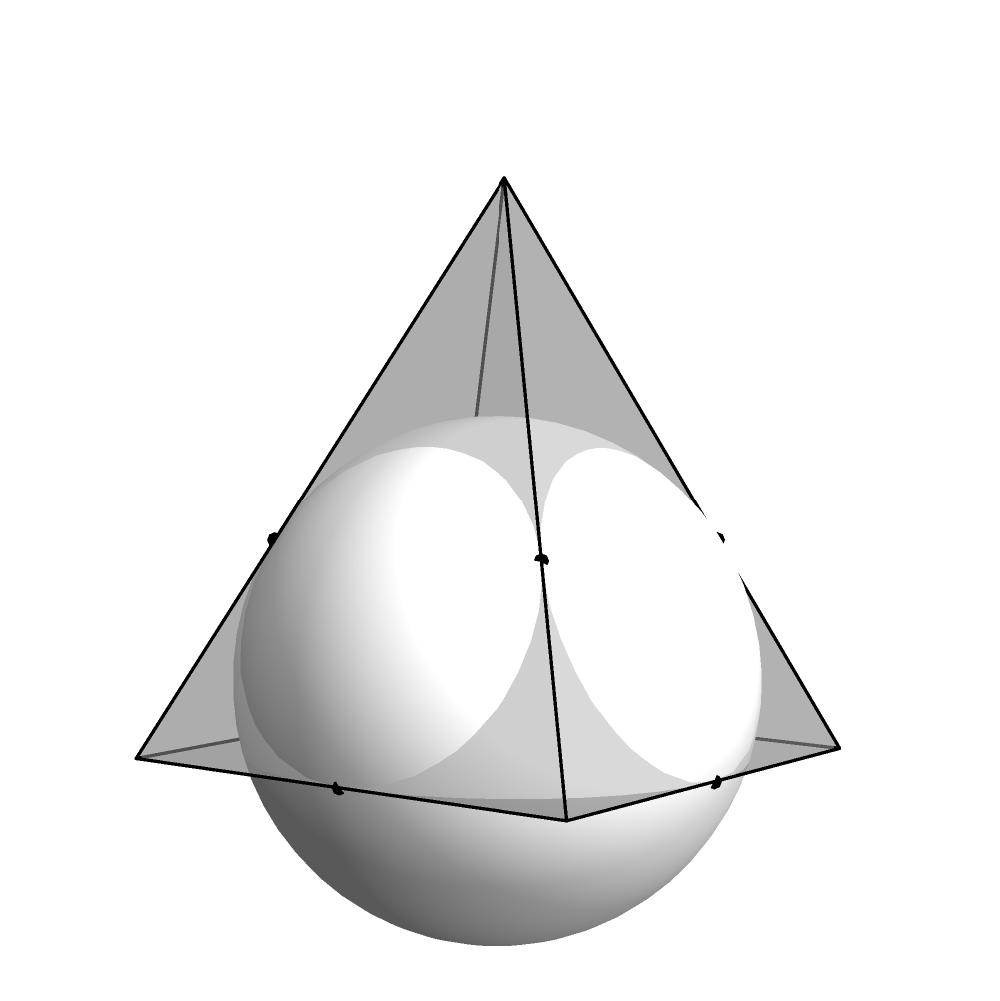}\includegraphics[width=.3\linewidth]{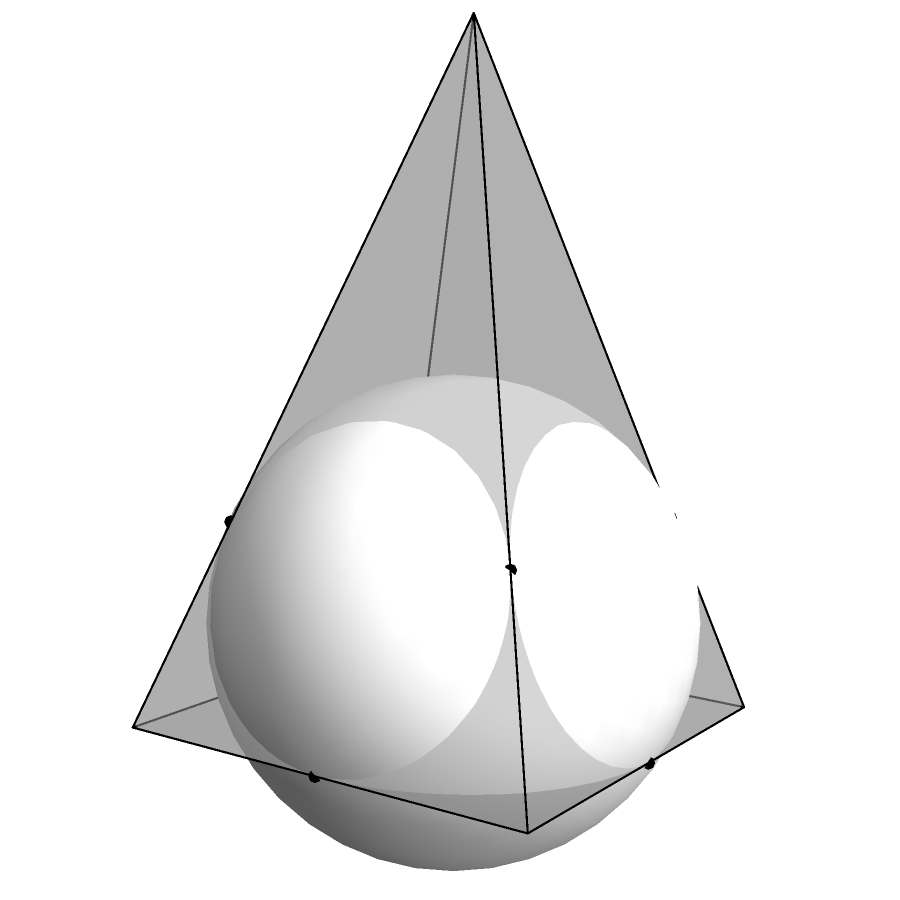}
	\vspace{-.3cm}
	\caption{Three edge-scribed realisations of a $4$-pyramid. The barycentre of the second one is the origin. In the third realisation, the barycentre of the contact points of the edges with the sphere  is the origin, so it is canonical.} 
	\label{fig:canonical}
\end{figure}

The \textit{symmetry group} $\mathfrak S(\P)$ of a $d$-polytope $\P$ is defined as the group of Euclidean isometries of $\rd$ preserving $\P$. A polytope is \textit{regular} if its symmetry group acts transitively on the set of its flags. A $d$-polytope with $d\ge2$ is called \textit{uniform} if it is regular for $d=2$ and for $d>2$, its facets are uniform, and its symmetry group is vertex-transitive. It is well known that the only uniform $3$-polytopes are the 5 Platonic solids, the 13 Archimedean solids, and the infinite families of prisms and antiprisms \cite{grunbaum1967convex}. Clearly, any regular $(d+1)$-polytope is \textit{$k$-scribable} for every $0\leq k\leq d$. With a little extra effort, it can be proved that uniform polytopes are inscribable, edge-scribable and, in general, non-circumscribable. Moreover, we have the following.

\begin{lem}\label{lem:unicanonical}
	Every edge-scribed uniform polytope is canonical and its barycentre is the origin.
\end{lem}

\begin{proof}The $2$-dimensional case is trivial. Let $\P$ be a uniform $d$-polytope with $d\ge3$. Let us first clarify that $\P$ admits a \textit{midsphere}, i.e. a sphere tangent to every edge. Every symmetry of $\P$ fixes its barycentre $\mathrm{bar}(\P)$. The vertex-transivity then implies that the distance from each vertex to $\mathrm{bar}(\P)$ is constant. Since every $2$-face of $\P$ is regular, every edge of $\P$ has equal length. By combining these two facts, we get that for every $e\in E(\P)$, $\|\mathrm{bar}(e)-\mathrm{bar}(\P)\|$ is constant and $e$ is orthogonal to the line passing through $\mathrm{bar}(e)$ and $\mathrm{bar}(\P)$. Thus, the sphere centred at $\mathrm{bar}(\P)$ with radius $\|\mathrm{bar}(e)-\mathrm{bar}(\P)\|$ is tangent to every edge of $\P$ at the barycentres of the edges, so it is the midsphere of $\P$.
	
	\medskip
	Let us now suppose that $\P$ is also edge-scribed, so its midsphere is the unit sphere $\mathbb S^{d-1}$. Therefore, $\mathrm{bar}(\P)$ is the origin $\mathbf{0}\in\mathbb R^d$. On the other hand, the vertex-transitivity implies that every vertex has same degree $\delta\ge d$, which gives us $2|E(\P)|= \delta |V(\P)|$. We show that $\P$ is canonical by proving that the barycentre of the contact points of the edges of $\P$ with $\mathbb S^{d-1}$ is also the origin.
	\begin{align*}
		\frac{1}{|E(\P)|}\sum_{x\in E(\P)\cap\mathbb S^d} x=\frac{1}{|E(\P)|}\sum_{e\in E(\P)} \mathrm{bar}(e)=&\frac{1}{|E(\P)|}\sum_{u,v\in e\in E(\P)} \frac12(u+v)\\
		=&\frac{1}{2|E(\P)|}\sum_{v\in V(\P)} \delta v=\frac{\delta}{\delta|V(\P)|}\sum_{v\in V(\P)} v=\mathrm{bar}(\P)=\mathbf 0\qedhere
	\end{align*}
\end{proof}
We note that an edge-scribed polytope satisfying one of the properties of Lemma \ref{lem:unicanonical} might not necessarily satisfy the other one (see Figure \ref{fig:canonical}). As it is described in the proof above, uniform polytopes have equal edge lengths. We define the \textit{canonical length} $\ell_\P$ of a uniform polytope $\P$  as the half the edge length of its canonical realisation. The following lemma establishes a relation between the canonical lengths and the Lorentzian vectors of uniform polytopes.


\begin{lem}	\label{lem:keylemma} For each $f\in\mathcal F(\P)$ of an edge-scribed uniform $d$-polytope $\mathcal P$,
	$\langle X_f,X_\P\rangle=-\ell_\P^{-2}$
	where $X_f:=\frac{1}{|V(f)|}\sum_{v\in V(f)}X_{v}.$
\end{lem}
\begin{proof}By Lemma \ref{lem:unicanonical}, $\P$ is canonical and the barycentre of $\P$ is the origin $\mathbf 0\in\rd$. Thus, the Euclidean norm of any vertex $v$ of $\P$ and the canonical length of $\P$ are related by $\|v\|^2=\ell_\P^2+1$. Therefore, by Equation (\ref{affinecoordinates}), the Lorentzian vector $X_v=\ell_\P^{-1}(v,1)^\top$ which belongs to the affine hyperplane $\Pi=\{\langle X,\varepsilon_{d+1}\rangle=-\ell_\P^{-1}\}\subset\mathbb R^{d,1}$, where $\varepsilon_{i}$ is the $i^{\text{th}}$ canonical of $\mathbb R^{d,1}$. Therefore, for each dimensional face $f$ of $\P$, $X_f\in \Pi$, which implies that $X_f-X_{\mathcal P}\in\Pi$. On the other hand, since $\mathrm{bar}(\P)=\mathbf 0\in\rd$, then $X_{\P}=(\mathbf 0,\ell_\P^{-1})^\top$. The line spanned by $X_{\mathcal P}$ is orthogonal to $\Pi$. Therefore, $	\langle X_f-X_{\P},X_{\P}\rangle=0$ which implies that $\langle X_f,X_{\P}\rangle=\langle X_{\P},X_{\P}\rangle=-\ell_{\P}^{-2}.$
\end{proof}

\subsection{Polytopal sphere packings}

Let $\P$ be a $d$-polytope whose vertices are outer points. The \textit{arrangement projection} of $\P$ is defined as the arrangement $\mathcal A_\P$ formed by the stereographic spheres of the vertices of $\P$ (see Figure \ref{fig:nonpolytopal}). For any edge $uv$ of $\P$, the spheres $S_u$ and $S_v$ are disjoint, tangent or have overlapping interiors, if and only if $uv$ cuts transversely, is tangent or avoids  $\sd$, respectively. Therefore, if $\P$ is edge-scribed, then $\mathcal A_\P$ is a packing. 	For every $d\ge2$, a sphere packing $\mathcal{S}_\P$ in $\wrd$ is \textit{polytopal} if there is an edge-scribable $(d+1)$-polytope $\P$ and a Möbius transformation $\mu$ such that $\mathcal{S}_\P=\mu\cdot\mathcal A_{\P_0}$, where $\P_0$ is a canonical realisation of $\P$. Clearly, there are sphere packings which are not polytopal (see Figure \ref{fig:nonpolytopal}). As Chen noticed in \cite{chen2016}, if $\mathcal A_\P$ is a packing, then the tangency graph of $\mathcal A_\P$ is a spanning subgraph of the graph of $\P$. Furthermore, if $\P$ is edge-scribed, then the two graphs are isomorphic. Therefore, the tangency relations of a polytopal sphere packing $\mathcal{S}_\P$ are encoded by the edges of $\P$.


\vspace{-.25cm}

\begin{figure}[H]
	\centering
	\begin{tabular}{ccc}
		\includegraphics[trim=10 15 0 50,clip,height=4cm,align=c]{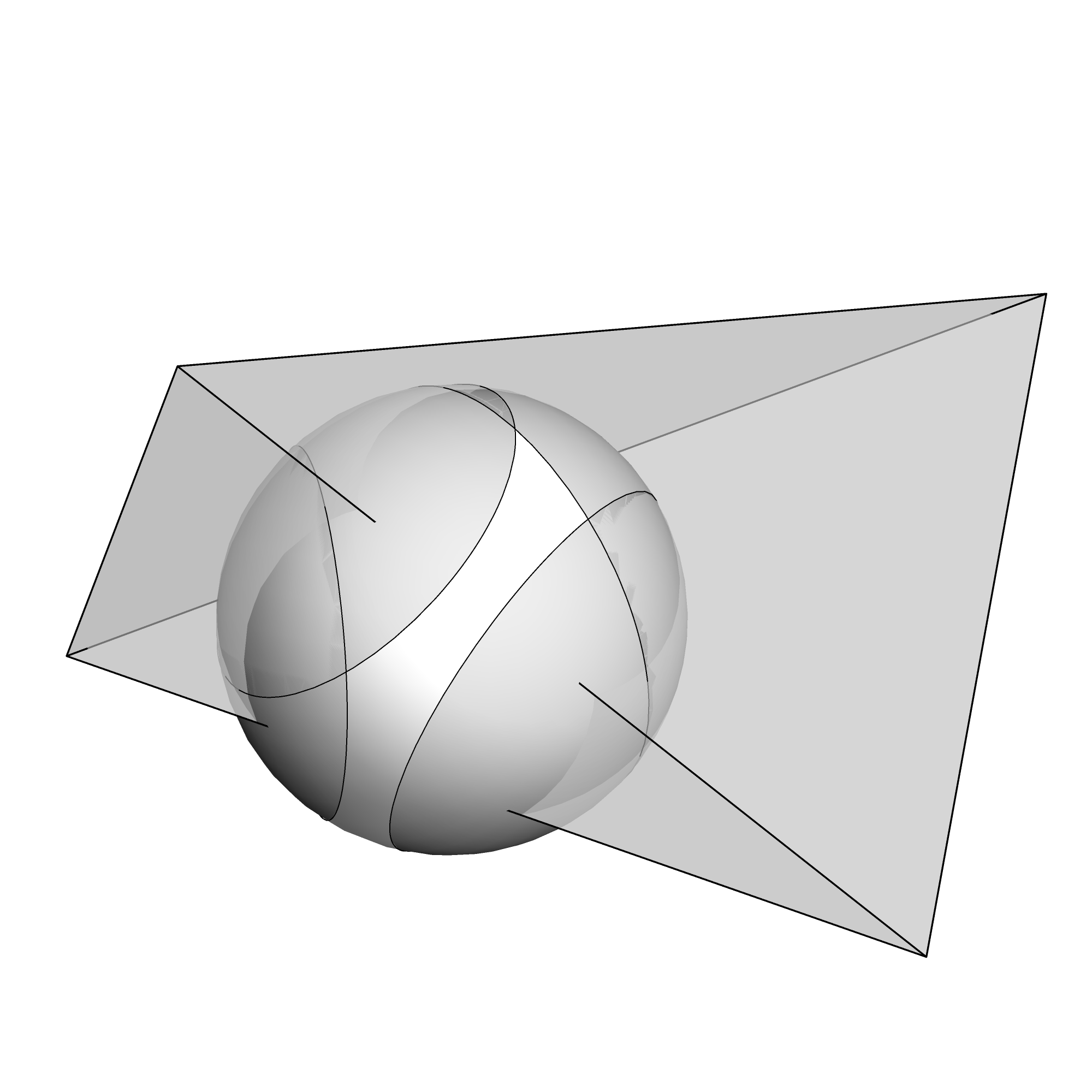}&\includegraphics[height=2.6cm,align=c]{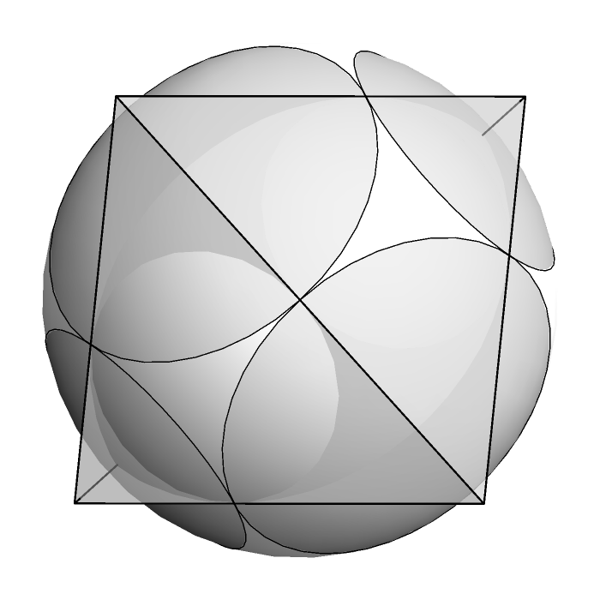}	 &\includegraphics[trim=0 0 20 30,clip,height=3cm,align=c,vshift=.1cm]{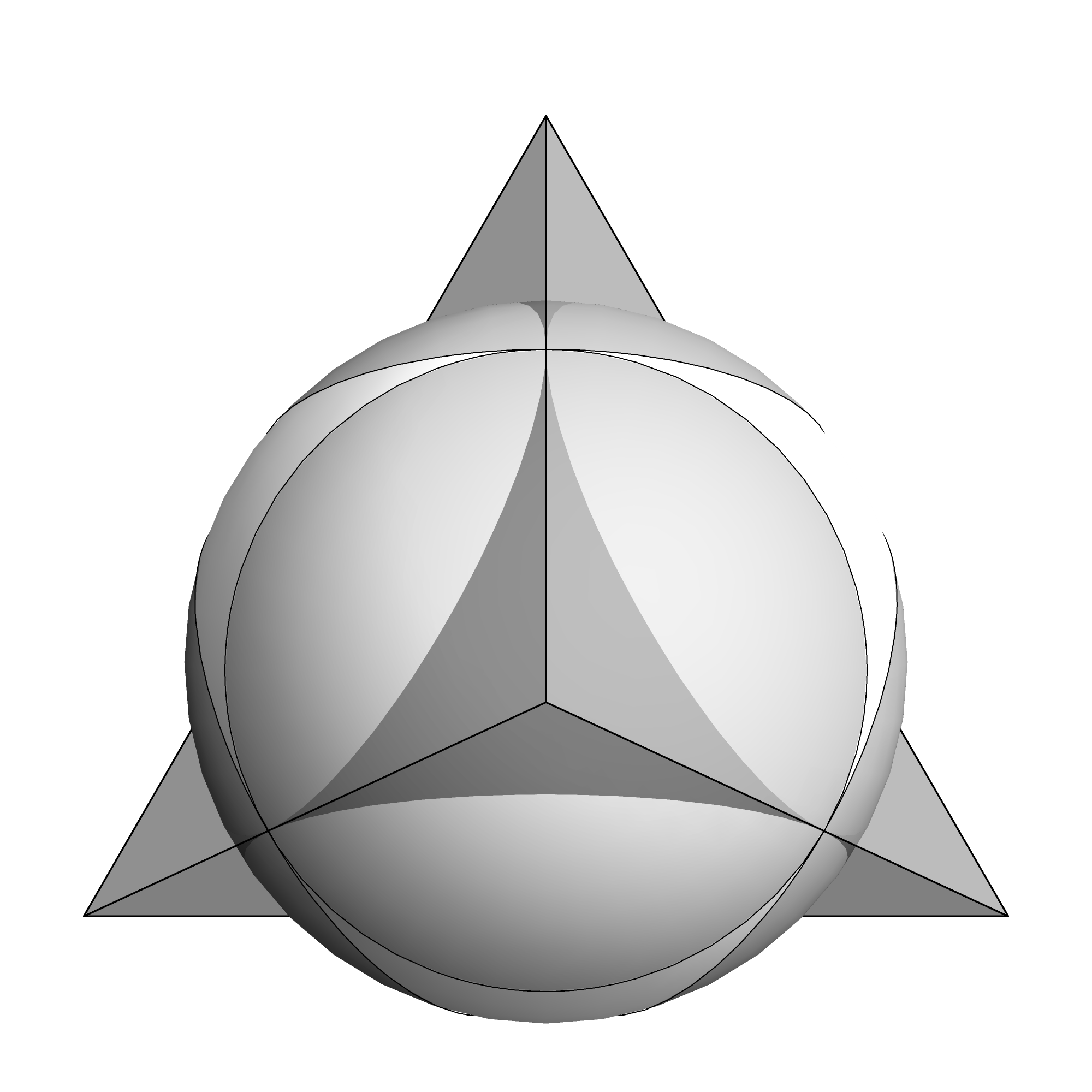} \\[-1.3cm]
		\reflectbox{\rotatebox[origin]{-150}{\includestandalone[width=.2\textwidth,align=c]{tikzs/diskarr1}}}&\includestandalone[height=3cm,align=c]{tikzs/nonpoly}& \includestandalone[align=c,height=3cm]{tikzs/projections/FCtetrahedron2}\\[-.5cm]
	\end{tabular}
	\vspace{-.25cm}
	\caption{
		(Top figures) Three polyhedra with the illuminated regions of their vertices.
		(Bottom figures) The arrangement projection of the three polyhedra. The last two are packings but only the third one is polytopal. 	}
	\label{fig:nonpolytopal}
\end{figure}

Let $\mathcal S_\P=\mu\cdot\mathcal A_{\P_0}$ be a polytopal sphere packing.  We define the \textit{dual arrangement} of $\mathcal{A}_\P$ as the arrangement $\mathcal{S}_\P^*:=\mu\cdot\mathcal A_{\P_0^*}$. Notice that $\mathcal{S}_\P^*$ is well-defined since $\P_0$ contains the origin in its interior. We call the spheres of $\mathcal{S}_\P^*$ the \textit{dual spheres} of $\mathcal S_\P$ and denote by $S_f:=S_{v_f}$ where $v_f$ is the vertex of $\P^*$ corresponding to a facet $f$ of $\mathcal P$.
By combining Equations \eqref{eq:lprod}, \eqref{eq:prodlorentztoeuclid}, \ref{eq:facepolar}, we have that for any vertex $v$ and any facet $f$ of $\P$ containing $v$, the spheres $S_v$ and $S_{f}$ are orthogonal. Since $\P_0$ is edge-scribed, $\P_0^*$ is ridge-scribed. If $\mathcal{S}_\P$ is in $\widehat{\mathbb{R}^2}$, then $\P$ is a $3$-polytope and $\P_0^*$ is also edge-scribed. Therefore, in this dimension, $\mathcal{S}_\P^*$ is also a packing. The union $\mathcal{S}_\P\cup\mathcal{S}_\P^*$ has been called a \textit{primal-dual circle representation}\cite{Felsner2019prim-dual}. Brightwell and Scheinerman proved  in \cite{bright-sch} the existence and the uniqueness up to Möbius transformations of primal-dual circle representations for every polyhedron. This can be seen as a stronger version of the Koebe-Andreev-Thurston Circle packing theorem \cite{bobenko}. In dimensions $d\ge3$, $\mathcal{S}_\P^*$ is no longer a packing since the dual spheres overlap \cite{rasskin2024regular}.
\begin{figure}[H]
	\centering
	\includegraphics[align=c,width=.64\textwidth]{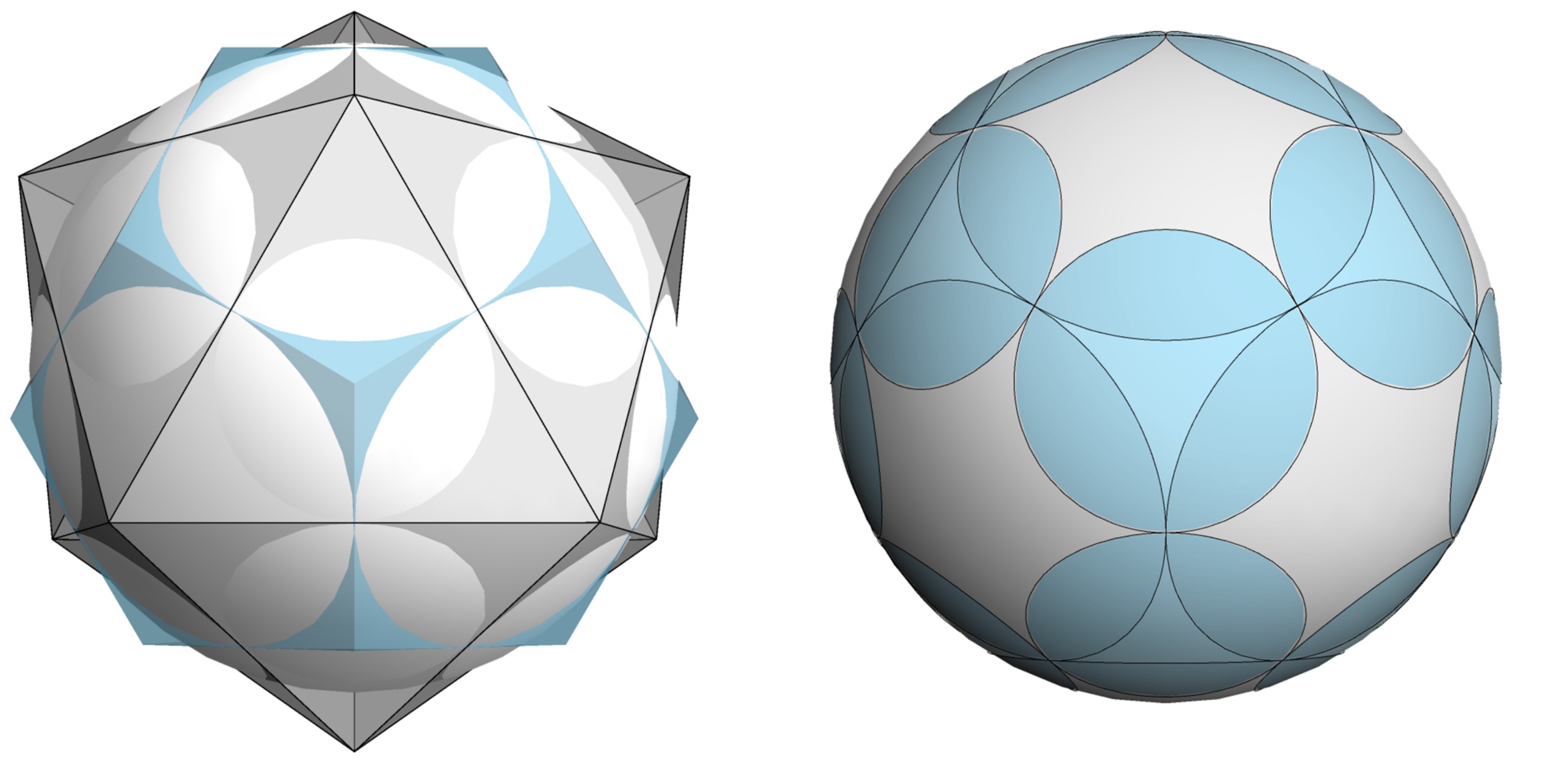}\hspace{.5cm}	\includegraphics[align=c,width=.3\textwidth]{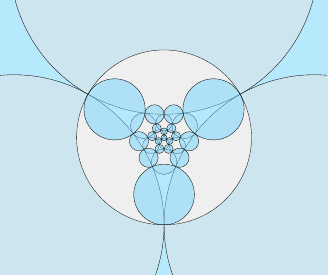}
	\caption{(Left) A canonical icosahedron and its polar in blue; (centre) the illuminated regions of the vertices of both; (right) the corresponding primal-dual circle representation.}   
	\label{fig:icodual}
\end{figure}	

We define the \textit{Apollonian arrangement} of $\S_\P$ as the union of the orbits $\mathscr P(\mathcal S_\P)=\langle \S_\P^*\rangle\cdot \mathcal S_\P$ where $\langle \S_\P^*\rangle$ is the group generated by the inversions through the dual spheres of $\S_\P$ (see Figure \ref{fig:apocolors}). The group $\langle \S_\P^*\rangle$ can be seen as the analogue of the \textit{Apollonian group} defined by Hirsch in \cite{hirst1967apollonian} for Apollonian packings. Similarly to dual arrangements, Apollonian arrangements in dimension $2$ are always packings. In dimensions $d>2$, even if $\S_\P^*$ is no longer a packing, $\mathscr P(\mathcal S_\P)$ might still be a packing. This occurs when the intersecting angles of all overlapping dual spheres satisfy the \textit{crystallographic restriction} \cite{boyd}. In such cases, $\mathscr P(\mathcal S_\P)$ belongs to the family of \textit{crystallographic sphere packings} introduced by Kontorovich and Nakamura in \cite{KontorovichNakamura}.

\begin{figure}[H]
	\centering
	\includegraphics[width=0.37\textwidth]{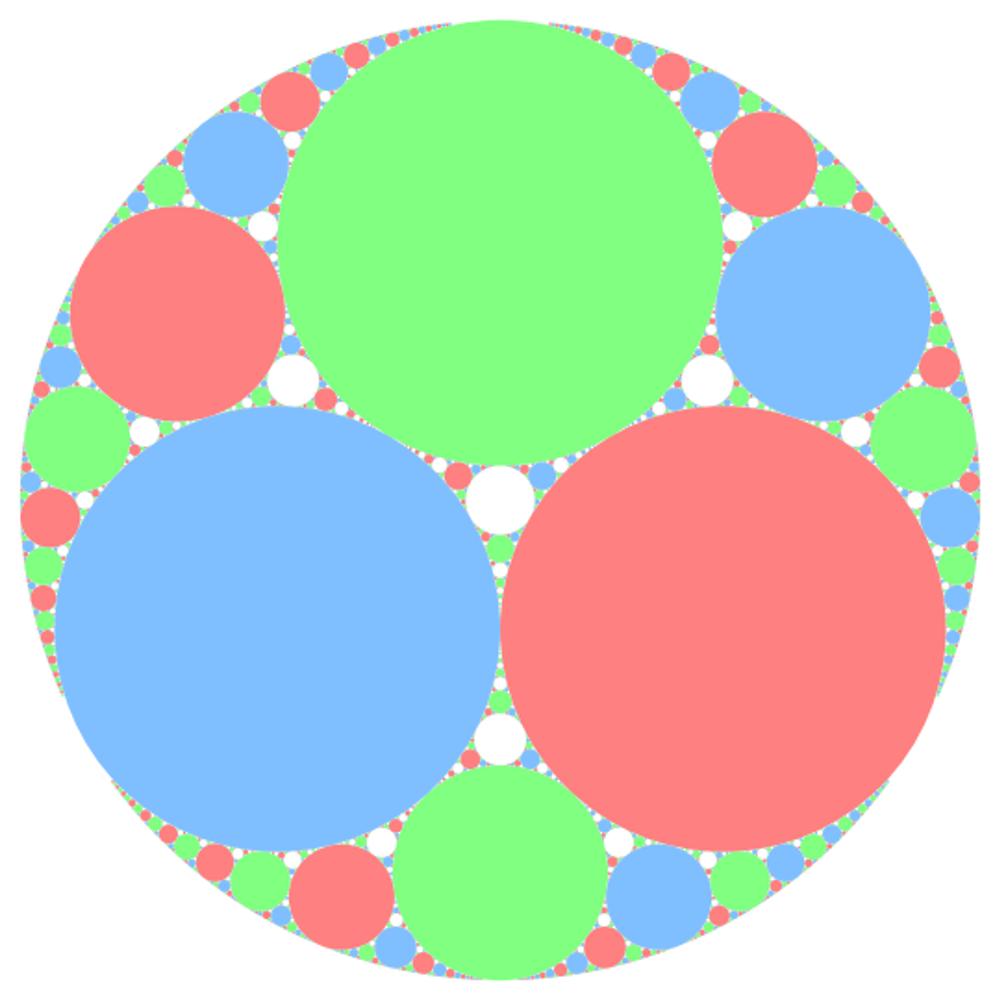}\hspace{.5cm}
	\includegraphics[width=0.37\textwidth]{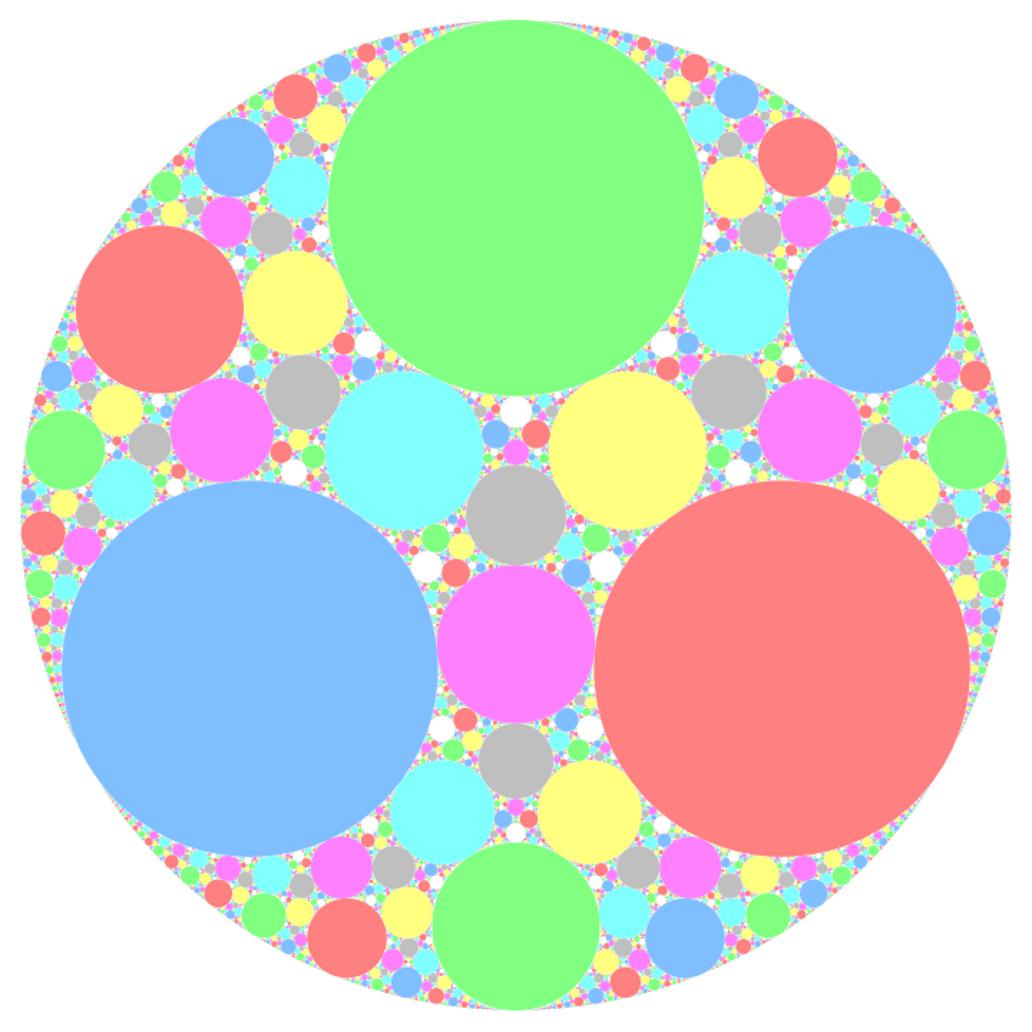}
	\caption{A tetrahedral (left) and cubic (right) crystallographic packing. Each colour represents an orbit under the action of the group generated by the inversions through the dual spheres. }
	\label{fig:apocolors}
\end{figure}

The Apollonian arrangement $\mathscr P(\mathcal S_\P)=\langle \S_\P^*\rangle\cdot \mathcal S_\P$ can be equally obtained as the union of the orbits of the action $(\langle \S_\P^*\rangle\rtimes \mathfrak S(\S_\P))\cdot \{\S_\P/\mathfrak S(\S_\P)\}$, where $\mathfrak S(\mathcal S_\P)\simeq \mathfrak S(\P)$ is the \textit{symmetry group} of $\mathcal{S}_\P$, defined as the group of Möbius transformations preserving $\mathcal{S}_\P$. We refer to the group  $\Gamma(\S_\P):=\langle \S_\P^*\rangle\rtimes \mathfrak S(\S_\P)$ as the \textit{full symmetry group} of $\mathscr P(\mathcal S_\P)$.  This group can be viewed as an analogue of the full symmetry group of the Apollonian-like packings defined by Baragar in \cite{baragar2018higher}.

\subsection{Möbius uniqueness and the Möbius spectrum}
Polytopal sphere packings and all their associated structures (such as the dual arrangement, Apollonian arrangement, and full symmetry group) are unique up to Möbius transformations. As noted in \cite{KontorovichNakamura}, this uniqueness follows from the Mostow Rigidity Theorem. Specifically, consider two polytopal sphere packings $\mathcal S_{\P_1},\mathcal S_{\P_2}$ in $\wrd$ for $d\ge2$, where $\P_1$ and $\P_2$ are edge-scribed realisations of an edge-scribable $(d+1)$-polytope. The polytopes $\mathcal Q_1=\P_1\cap\P_1^*$ and $\mathcal Q_2=\P_2\cap\P_2^*$ are inscribed in $\mathbb S^d$ and thus correspond to ideal hyperbolic polytopes in $\mathbb H^{d+1}$ with finite volume. Moreover, $\mathcal Q_1$ and $\mathcal Q_2$ are the fundamental domains of the hyperbolic reflection group generated by the reflections on their facets. Given that all dihedral angles of the facets are right angles due to polarity, these reflection groups are isomorphic. Therefore, by Mostow Rigidity Theorem, there exists a hyperbolic isometry of $\mathbb H^{d+1}$ mapping $\mathcal Q_1$ to $\mathcal Q_2$, which extends to Möbius transformation of $\wrd$ mapping $\mathcal S_{\P_1}$ to $\mathcal S_{\P_2}$.

\medskip
For every $d\ge3$, we define the \textit{Möbius spectrum} of an edge-scribable $d$-polytope $\P$ as the eigenvalues of $\mathrm{Gram}(\S_\P)$. Due to the Möbius uniqueness of edge-scribable polytopes with the invariance of the inversive product under Möbius transformations, the Möbius spectrum of $\P$ is independant of the specific packing $\S_\P$ and thus it is a well-defined invariant of edge-scribable polytopes. According to Steinitz's theorem \cite{steinitz1928isoperimetrische}, the graph of a $3$-polytope is a 3-connected simple planar graph, also known as a \textit{polyhedral graph}. Given that all $3$-polytopes are edge-scribable, the Möbius spectrum can be defined for any polyhedral graph. 

\begin{question}For any $d\ge3$, are there two combinatorially different edge-scribable $d$-polytopes with the same Möbius spectrum? In particular, are there two non isomorphic polyhedral graphs with the same Möbius spectrum?
\end{question}

The reader can find in the Appendix \ref{sec:appendix} the Möbius spectrum of each Platonic solid.

\section{A polytopal Descartes' theorem}\label{sec:DescartesRegPol}
In this section, we shall prove our generalization of Descartes' theorem for polytopal sphere packings induced by uniform polytopes, in terms of the polytope's geometry. We present first some notions and a lemma needed for the statement and the proof. Boyd's generalization of Descartes' theorem, presented in \cite{boyd}, asserts that for any arrangement $\mathcal A=(S_1,\ldots,S_{d+2})$ of spheres of $\wrd$ with full-rank Gramian, the bend vector $\mathbf b=(b_1,\ldots,b_{d+2})^\top$ satisfies
\begin{align}\label{eq:boydobs}
	\mathbf b^\top\mathrm{Gram}(\mathcal A)^{-1}\mathbf b=0.
\end{align}

 Let $X_N=\varepsilon_{d+1}+\varepsilon_{d+2}$ be the Lorentzian vector corresponding to the North Pole of $\mathbb S^d$ in the affine model of the space of spheres, where $\varepsilon_i$ denotes the $i$-th canonical vector of $\ed$ (see Figure \ref{fig:stereo}). Notice that $X_N$ lies on the \textit{light-cone} of $\ed$, i.e. 
\begin{align}\label{eq:boydxN}
	\langle  X_N, X_N\rangle=0.
\end{align}
We define the \textit{curvature} of any Lorentzian vector $X\in\mathbb R^{d+1,1}$ as
\begin{align}\label{eq:curvgen}
	\kappa(X):=-\langle X_N,X\rangle
\end{align}
Notice that, by Equation \ref{eq:invcoord}, $\kappa(X_S)=b(S)$ where $b(S)$ denotes the bend of $S$.
This notion of curvature, satisfies Boyd's equation for any basis of Lorentzian vectors.

\begin{lem}\label{lem:klemma}
	For any basis of Lorentzian vectors $\mathcal B=(X_1,\ldots,X_{d+2})$ of $\ed$ the curvature vector $\mathbf k=(\kappa_1,\ldots,\kappa_{d+2})^\top$
	satisfies
	\begin{align}
		\mathbf k^\top\mathrm{Gram}(\mathcal B)^{-1}\mathbf k=0
	\end{align}
\end{lem}
\begin{proof}Let $\mathbf B$ be the matrix of the Cartesian coordinates of $\mathcal B$. By combining Equations \eqref{eq:invprodmatrix} \eqref{eq:curvgen} and the definition of the Gramian, we have
	\begin{align*}
		\mathbf k^\top\mathrm{Gram}(\mathcal B)^{-1}\mathbf k=	(X_N^\top\mathbf Q_{d+2}\mathbf B^\top)((\mathbf B^\top)^{-1}\mathbf Q_{d+2}\mathbf B^{-1})(\mathbf B\mathbf Q_{d+2}X_N)=X_N^\top\mathbf Q_{d+2}X_N=\langle X_N,X_N\rangle=0
	\end{align*}
\end{proof}

%
Let $\mathcal S_\P$  be a polytopal sphere packing. For every face $f$ of $\P$, we define the \textit{polytopal curvature} of $\mathcal S_\P$ with respect to  $f$ as $\kappa_f:=\kappa(X_f)$ where $X_f:=\frac{1}{|V(f)|}\sum_{v\in V(f)}X_{v}$.
 By linearity, $\kappa_f$ corresponds to
\begin{align}
	\kappa_f=\frac{1}{|V(f)|}\sum_{v\in V(f)}b(S_v)
\end{align}
We now have all the ingredients to prove the main theorem, which we restate below.

\setcounter{thm}{1}
\begin{thm}\label{thm:poldescartes}
	Let $\mathcal S_\P$ be a polytopal sphere packing where $\P$ is a uniform $(d+1)$-polytope with $d\ge1$. The polytopal curvatures of $\mathcal S_\P$ with respect to the faces in any flag $(f_0,\ldots,f_d,f_{d+1}=\P)$ satisfy
	\begin{align}\label{eq:poldesth}
		(\kappa_{f_0}-\kappa_{f_1})^2+\ell_{f_2}^2(\kappa_{f_1}-\kappa_{f_2})^2+\sum_{i=2}^{d}\frac{1}{\ell_{f_{i+1}}^{-2}-\ell_{f_i}^{-2}}(\kappa_{f_i}-\kappa_{f_{i+1}})^2=\ell_\P^2\kappa_{\P}^2
	\end{align}
\end{thm}
\begin{proof} Let $\P$ be an edge-scribed uniform $(d+1)$-polytope with $d\ge1$ and let $\Phi=(f_0,f_1,\ldots,f_d,f_{d+1}=\P)$ be a flag of $\P$. 
	Let $\mathcal B=(Y_1, \ldots, Y_{d+2})$, where $Y_i:=X_{f_{i-1}}-X_{f_i}$ for every $i=1,\ldots,d+1$, and $Y_{d+2}:=X_\P$. Let us compute the Gramian of $\mathcal B$. Let $v=f_0$, $e=f_1$ and let $v'$ be the other vertex of $e$.
	By combining Equations \eqref{eq:lprod} and \eqref{eq:invcoord}, we have
	\begin{align*}
		&\langle X_v,X_v\rangle=1\\
		&\langle X_v,X_e\rangle=\langle X_v,\frac12(X_v+X_{v'})\rangle=\frac12(1-1)=0\\
		&\langle X_e,X_e\rangle=\langle \frac12(X_v+X_{v'}),\frac12(X_v+X_{v'})\rangle=\frac14(\langle X_v,X_v\rangle+2\langle X_v,X_{v'}\rangle+\langle X_{v'},X_{v'}\rangle)=\frac14(1-2+1)=0
	\end{align*}
	By definition of uniform polytope, $f_i$ is uniform for every $2\le j\le d+1$. Moreover, the intersection of $\mathbb S^{d}$ with the affine subspace spanned by $f_j$ induces an edge-scribed realization of $f_j$. Therefore, by Lemma \ref{lem:keylemma},  
    we have that for every $2\le j\le d+1$ and every $0\le i\le j$, $\langle X_{f_i},X_{f_j}\rangle=-\ell_{f_j}^{-2}$. Then,
	\begin{align*}
		\langle Y_1,Y_1\rangle&=\langle X_v-X_e, X_v-X_e\rangle=\langle X_v,X_v\rangle-2\langle X_v,X_e\rangle+ \langle X_e,X_e\rangle=1\\
		\langle Y_2,Y_2\rangle&=\langle X_e-X_{f_2}, X_e-X_{f_2}\rangle=\langle X_e,X_e\rangle-2\langle X_e,X_{f_2}\rangle+ \langle X_{f_2},X_{f_2}\rangle=\ell_{f_2}^{-2}
	\end{align*}
	For the rest of diagonal $i^\text{th}$-entries with $3\le i\le d+1$, we have
	\begin{align*}
		\langle Y_i,Y_i\rangle&=\langle X_{f_{i-1}}-X_{f_{i}}, X_{f_{i-1}}-X_{f_{i}}\rangle=\langle X_{f_{i-1}}, X_{f_{i-1}}\rangle-2\langle X_{f_{i-1}},X_{f_{i}}\rangle+\langle X_{f_{i-1}}, X_{f_{i}}\rangle=\ell_{f_{i+1}}^{-2}-\ell_{f_i}^{-2}
	\end{align*}
	and $\langle Y_{d+2},Y_{d+2}\rangle=\langle X_\P,X_\P\rangle=-\ell_{\P}^{-2}$.
	For the non-diagonal entries, we have
		\begin{align*}
		\langle Y_1,Y_2\rangle=\langle X_v-X_e, X_e-X_{f_2}\rangle&=\langle X_v,X_e\rangle-\langle X_v,X_{f_2}\rangle-\langle X_e,X_e\rangle+ \langle X_{f_2},X_{f_2}\rangle=0+\ell_{f_2}^{-2}-0-\ell_{f_2}^{-2}=0
	\end{align*}
	and for every  $3\le j\le d+2$ and every $1\le i< j$,
			\begin{align*}
		\langle Y_i,Y_j\rangle=\langle X_{f_{i-1}}-X_{f_{i}}, X_{f_{j-1}}-X_{f_{j}}\rangle&=\langle X_{f_{i-1}},X_{f_{i}}\rangle-\langle X_{f_{i-1}},X_{f_{j-1}}\rangle-\langle X_{f_{i}},X_{f_{j-1}}\rangle+ \langle X_{f_i},X_{f_j}\rangle\\
		&=-\ell_{f_j}^{-2}+\ell_{f_{j+1}}^{-2}+\ell_{f_{j}}^{-2}-\ell_{f_{j+1}}^{-2}=0
	\end{align*}
	
	Consequently, $\mathrm{Gram}(\mathcal B)=\mathrm{diag} (1,\ell_{f_2}^{-2},\ldots,\ell_{f_{i+1}}^{-2}-\ell_{f_i}^{-2},\ldots,-\ell_\P^{-2}) $.
	On the other hand, by linearity, the vector of curvatures of $\mathcal B$ is equal to $\mathbf k=(\kappa_{f_0}-\kappa_{f_1},\ldots,\kappa_{f_d}-\kappa_{\P},
	\kappa_\P)^\top$. Since $\mathcal B$ is a basis of $\ed$, we can apply Lemma \ref{lem:klemma}, which gives us 
	\begin{align*}
		\mathbf k^\top\mathrm{diag}(1,\ell_{f_2}^{2},\ldots,\frac{1}{\ell_{f_{i+1}}^{-2}-\ell_{f_i}^{-2}},\ldots,-\ell_\P^{2}) \mathbf k=0
	\end{align*}
	which is equivalent to \eqref{eq:poldesth}.		
\end{proof}

\section{Geometry and arithmetic of the Platonic crystallographic packings}\label{sec:platonic}
In this section, we explore the packings based on the simplest family of polytopes covered by Theorem \ref{thm:poldescartes}: the regular 3-polytopes, commonly known as the \textit{Platonic solids}.  By applying Theorem \ref{thm:poldescartes}, we present a generalization of Descartes' theorem in terms of the \textit{Schläfli symbol}, which provides the framework to construct analogues of integral Apollonian packings for the Platonic solids.  In Appendix \ref{sec:appendix}, we provide a catalogue summarising all the properties discussed in this paper for each Platonic solid.

\subsection{Fundamental symmetries and fundamental basis of the Platonic solids}
Let $\P$ be a Platonic solid and let $\Phi=(f_0=v,f_1=e,f_2=f,f_{3}=\P)$ be one of its flags. The tetrahedron $\Delta_\Phi$ whose vertices are the barycentres of every  $f_i\in\Phi$ is a fundamental domain of the symmetry group $\mathfrak S(\P)$. This group is the finite Coxeter group generated by the reflections $r_1,r_2,r_3 $, which we refer to as the \textit{fundamental symmetries} with respect to $\Phi$. Here, $r_i$ represents the reflection through the plane $R_i$ spanned by the facet of $\Delta_\Phi$ which is opposite to the barycentre of  $f_{i-1}$. 

\medskip
Each reflection $r_i$ fixes all the faces of $\Phi$ except $f_{i-1}$, swapping it with the unique $(i-1)$-face $f_{i-1}'$ such that the new flag $\Phi'=(\Phi\setminus \{f_{i-1}\})\cup\{f_{i-1}'\}$ is a valid flag of $\P$ (see Figure \ref{fig:fundamentals}). 

\medskip

We define the \textit{fundamental basis} of $\P$ with respect to $\Phi$, as the affine basis of vertices $(v_1,v_2,v_3,v_4)$ of $\P$, given by
\begin{align}
	v_1&=f_0,&&v_2=r_1(v_1),&&v_3=r_1r_2(v_2),&&v_4=r_1r_2r_3(v_3). 
\end{align}
We notice that for every $i=1,2,3,4$, $(v_1,\ldots,v_{i})$ is a fundamental basis of $f_{i-1}\in\Phi$. Additionally, $(v_1,v_2,v_3,v_4)$ describes a path of four distinct vertices  in the graph of $\P$, where  $(v_1,v_2) = e\in E(\P)$, $(v_2,v_3) =e'\in E(\P)\setminus e$, $(v_1,v_2,v_3)\in f\in F(\P)$ and $(v_2,v_3,v_4)\in f'\in F(\P)\setminus f$. Here $ F(\P)=F_2(\P)$ denotes the set of $2$-faces of $\P$.  
\begin{figure}[H]
	
	\begin{tikzpicture}[scale=1.1]
		\begin{scope}[xshift=-3cm]
			\node 	{\includegraphics[width=5cm]{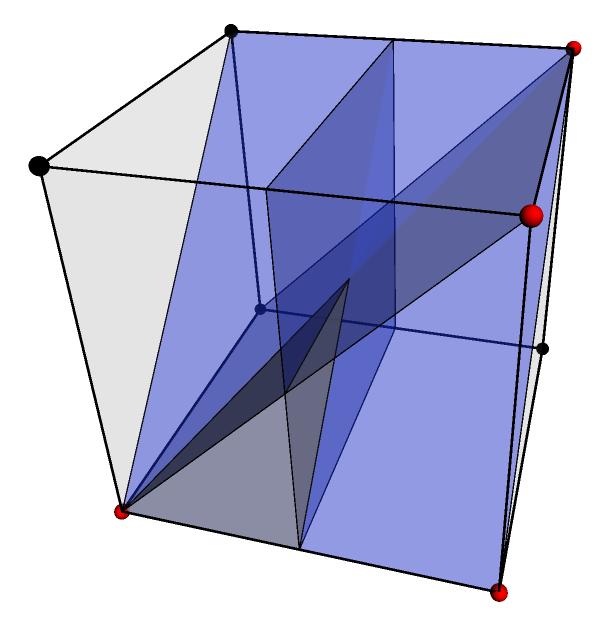}};
			\node at (-1.55,-1.75) {$v_1$};
			\node at (1.7,-2.3) {$v_2$};
			\node at (2.1,.7) {$v_3$};
			\node at (2.3,2.1) {$v_4$};
			\node at (0.4,1.4) {\color{white} $R_1$};
			\node at (1.3,1) {\color{white} $R_2$};
			\node at (1.2,-.5) {\color{white}  $R_3$};					
		\end{scope}		
		
		\begin{scope}[xshift=3cm,scale=.5]
			\draw (-2.414,2.414) circle (2.414cm);
			\draw (-.414,.414) circle (.414cm);
			\draw (.414,-.414) circle (.414cm);
			\draw (-.414,-.414) circle (.414cm);
			\draw[red] (2.414,2.414) circle (2.414cm);
			\draw[red] (2.414,-2.414) circle (2.414cm);
			\draw[red] (-2.414,-2.414) circle (2.414cm);
			\draw[red] (.414,.414) circle (.414cm);
			
			\draw[thick,blue] (0,-5) -- (0,5) node at (.7,5.2) {$R_1$} ;
			\draw[thick,blue] (-5,-5) -- (5,5) node at (5.6,4.8) {$R_2$} ;
			\draw[thick,blue] (0,-1) circle (1.414cm) node at (1.9,-1) {$R_3$};			
		\end{scope}		
	\end{tikzpicture}
	\caption{(Left) A cube with a fundamental domain of its symmetry group (in dark gray), a fundamental basis ($v_1,v_2,v_3,v_4$) (in red) and the fixed planes of the fundamental symmetries (in blue). 
		(Right) A cubic circle packing with the corresponding fundamental basis (in red) and the fundamental symmetries (in blue).
	} 
	\label{fig:fundamentals}
\end{figure}

\subsection{The full symmetry groups} We extend the definitions of fundamental symmetries and fundamental bases to any polytopal circle packing $\S_\P$ derived from a Platonic solid $\P$, as illustrated in Figure \ref{fig:fundamentals}.  We also define the \textit{fundamental bend vector} of $\S_\P$ with respect to $\Phi$ as the bend vector of the circles corresponding to the fundamental basis with respect to $\Phi$. The transitivity of $\mathfrak S(\P)=\langle r_1,r_2,r_3\rangle$ on the set of vertices and faces implies that the full symmetry group $\Gamma(\S_\P)=\langle s_f\rangle\rtimes \mathfrak S(\S_\P)=\langle r_1,r_2,r_3,s_f\rangle$ and $\{\S_\P/\mathfrak S(\S_\P)\}=\{S_v\}$, where $v$ and $f$ are any vertex and face of $\P$, respectively. Therefore, the Apollonian arrangement $\mathscr P(\S_\P)=\langle \S_{\P}^*\rangle\cdot \mathcal S_{\P}=\Gamma(\S_\P)\cdot \{S_v\}$. In Figure \ref{fig:apollonianclassic}, we illustrate the classic Apollonian strip packing obtained as the orbit space of the two group actions. 
\begin{figure}[H]
	\centering
	\begin{tabular}{cc}
		\includestandalone[align=c,scale=1.2]{tikzs/33apollonianmirrors}&\hspace{.5cm}	
		\includestandalone[align=c,scale=1.2]{tikzs/33standard}
	\end{tabular}
	\caption{The Apollonian strip packing  $\mathscr{P}_{\{3,3\}}$ obtained as crystallographic packing given by the action of the Apollonian group on a tetrahedral circle packing (left) and the full symmetry group $\Gamma_{\{3,3\}}$ on a single circle (right).
	}
	\label{fig:apollonianclassic}
\end{figure}

\subsection{The Platonic crystallographic packings}
The packing depicted in Figure \ref{fig:apollonianclassic} is commonly known as the \textit{Apollonian strip packing} and serves as a standard configuration for various purposes. We extend this notion for every Platonic solid $\P$ by stating that a packing $\S_\P$ is \textit{strip} for a given flag $(v,e,f,\P)$ if:
\begin{enumerate}
	\item The circle $S_v\in \S_\P$ is the half-space $\{y\le0\}$.
	\item The dual circle $S_f\in \S_\P^*$  is the half-space $\{x\le0\}$.
	\item The fundamental symmetry $r_2$ is the inversion through the unit sphere.
\end{enumerate}
Clearly, strip packings are unique up to Euclidean isometries. We denote by $\mathscr P_{\{p,q\}}$ the Apollonian arrangement (up to Möbius transformations)  and by $\Gamma_{\{p,q\}}$ its full symmetry group, for each Platonic solid ${\{p,q\}}$. We refer to these five packings as the \textit{Platonic crystallographic packings.} The Apollonian strip packing of Figure \ref{fig:apollonianclassic} corresponds to $\mathscr{P}_{\{3,3\}}$. In Figures \ref{fig:simpmirrors} and \ref{fig:simpmirrors2}, we illustrate the remaining Platonic crystallographic (strip) packings.
\begin{figure}[H]
	\centering
	\begin{tabular}{cc}
		\includegraphics[align=c,scale=1.2]{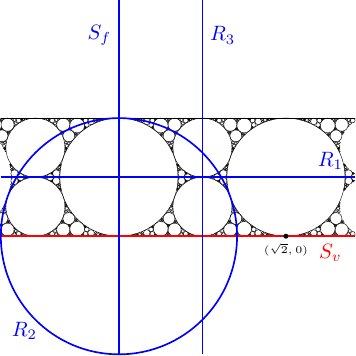}\hspace{.5cm}	&\includegraphics[align=c,scale=1.2]{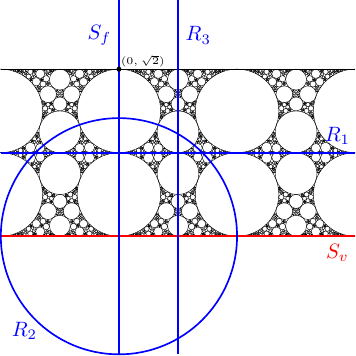}\\
	\end{tabular}
	\caption{The octahedral  $\mathscr{P}_{\{3,4\}}$ (left) and the cubic $\mathscr{P}_{\{4,3\}}$ (right) crystallographic packings.}
	\label{fig:simpmirrors}
\end{figure}
\begin{figure}[H]
	\centering
	\begin{tabular}{cc}
		\includegraphics[align=c,scale=1.2]{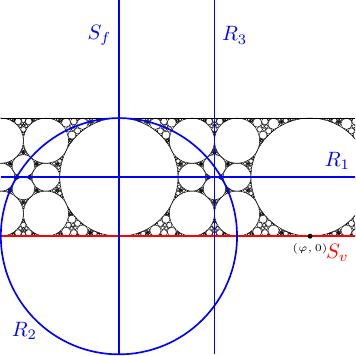}\hspace{.5cm}	&\includegraphics[align=c,scale=1.2]{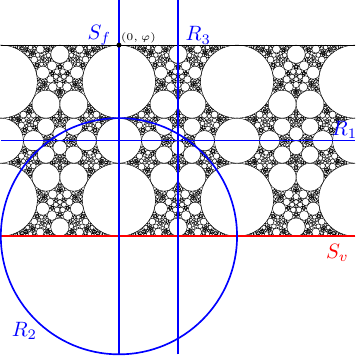}\\
	\end{tabular}
	\caption{The icosahedral  $\mathscr{P}_{\{3,5\}}$ (left) and the dodecahedral $\mathscr{P}_{\{5,3\}}$ (right) crystallographic packings.}
	\label{fig:simpmirrors2}
\end{figure}

\subsection{The Platonic Descartes Theorem} 
The classic Descartes' theorem is usually stated in terms of the \textit{Descartes quadratic form} \cite{Fuchs2013Counting} whose corresponding matrix is
\begin{align}
	\mathbf Q_D:=\left(
	\begin{array}{cccc}
		1 & -1 & -1 & -1 \\
		-1 & 1 & -1 & -1 \\
		-1 &-1 &  1 & -1 \\
		-1 &-1 & -1 &  1 \\
	\end{array}\right)
\end{align}
The matrix $\mathbf Q_D$ derives from Boyd's generalization \eqref{eq:boydobs}. For every Platonic solid $\{p,q\}$ , we define the \textit{Platonic quadratic form}
\begin{align}\label{eq:descartes3d}Q_{\{p,q\}}(b_1,b_2,b_3,b_4)=\mathbf{b}^T\mathbf{Q}_{\{p,q\}} \mathbf{b}
\end{align}
where $\mathbf Q_{\{p,q\}}$ is the bisymmetric matrix
\begin{align}	\mathbf Q_{\{p,q\}}=
	\left(
	\begin{array}{ccccc}
		1 & a & b & -1  \\
		a & d & c & b \\
		b & c & d & a \\
		-1 & b& a & 1\\
	\end{array}
	\right)&&\text{ with }&&\begin{array}{rl}
		a&=-1-\omega _p-\omega _q\\
		b&=-1+\omega _p-\omega _q\\
		c&=-1-\omega _p^2+\omega _q^2\\		
		d&=\left(1+\omega _p+\omega _q\right)^2 \\
	\end{array}
\end{align}
and $\omega_{n}:=1+2\cos\frac {2\pi}n$. Notice that $\omega_3=0$, $\omega_4=1$, $\omega_5=\varphi$ and $Q_{\{3,3\}}$  the Descartes quadratic form.

\begin{prop} \label{prop:platonicdesth}
	 Let $\mathcal S_{\P}$ be a polytopal circle packing where $\P$ is the Platonic solid $\{p,q\}$. For any fundamental bend vector $\mathbf b=(b_1,b_2,b_3,b_4)^\top$ of $\mathcal S_{\P}$ we have
	\begin{align}\label{eq:platonicdescartes}
		Q_{\{p,q\}}(b_1,b_2,b_3,b_4)=0
	\end{align}

\end{prop}

\begin{proof}
	Let $\Phi=(v,e,f,\P)$ be a flag of $\P$. Theorem \ref{thm:poldescartes} states that
	\begin{align}\label{eq:platonicflag}
		(\kappa_v-\kappa_e)^2+\ell_f^2(\kappa_e-\kappa_f)^2+\frac{1}{\ell_\P^{-2}-\ell_f^{-2}}(\kappa_f-\kappa_\P)^2=\ell_\P^2\kappa_\P^2
	\end{align}
	where $\ell_f$ and $\ell_\P$ are the canonical lengths of $f=\{p\}$ and $\P=\{p,q\}$. These values can be easily computed in terms of $\omega_p$ and $\omega_q$ by
	\begin{align}\label{eq:lvaluesfp}
		\ell_f=\sqrt{\frac{3-\omega_p}{1+\omega_p}}&&\ell_\P=\sqrt{\frac{2-\omega_p-\omega_q}{1+\omega_p}}
	\end{align}
	Let $(v_1,v_2,v_3,v_4)$ be a fundamental basis of $\P$ with respect to $\Phi=(v,e,f,\P)$, and let $\Phi'=(v',e',f',\P)$ be the flag of $\P$ where the corresponding fundamental basis $(v_1',v_2',v_3',v_4')$ satisfies that $v_2'=v_1$, $v_3'=v_2$, $v_4'=v_3$. Then, the edges $e=(v_1,v_2)$ and $e'=(v_1',v_2')$ share the vertex $v=v_1$. Similarly, the faces $f$ and $f'$ share the edge $e$.
	By solving \eqref{eq:platonicflag} in each of the polytopal curvatures $\kappa_e$, $\kappa_f$, after replacing the values in \eqref{eq:lvaluesfp}, the corresponding two solutions gives the polytopal curvatures of the faces in $\Phi$ and $\Phi'$. Then, by adding both solutions, we obtain the following relations.
	\begin{align}
		\label{eq:keke'}\kappa_e+\kappa_{e'}=&\frac{(1+\omega_p)\kappa_v+(3-\omega_p)\kappa_f}{2}\\
		\label{eq:kfkf'}\kappa_f+\kappa_{f'}=&2\frac{(1+\omega_q)\kappa_e+(2-\omega_p-\omega_q)\kappa_\P}{3-\omega_p}
	\end{align}
	Let  $\mathbf b=(b_1,b_2,b_3,b_4)^\top$ be the fundamental bend vector of $\mathcal S_{\P}$ with respect to $\Phi$. By combining the equations \eqref{eq:keke'} and \eqref{eq:kfkf'} with the definition of polytopal curvatures of the faces in $\Phi\cup\Phi'$, we obtain the following equations
	\begin{align}
		\label{eq:cycvertex}	\kappa_v=&b_1\\
		\label{eq:cycedge}	\kappa_e=&\frac{b_1+b_2}2 \\
		\label{eq:cycface}	\kappa_f=&\frac{b_1+b_2+b_3-\omega_p b_2}{3-\omega_p}\\
		\label{eq:cycpol}	\kappa_\P=&\frac{b_1+b_2+b_3+b_4-(\omega_p+\omega_q)(b_2+b_3)}{4-2(\omega_p+\omega_q)}
	\end{align}
	The above relations define a transition matrix $\mathbf T$ satisfying
	\begin{align}\label{eq:transition}
		\mathbf k=\mathbf T \mathbf b
	\end{align} 
	where $\mathbf k=(\kappa_v,\kappa_e,\kappa_f,\kappa_\P)^\top$. Let $\mathbf Q_\Phi$ be the matrix of the quadratic form induced by \eqref{eq:platonicflag} after combining with \eqref{eq:lvaluesfp}. Then, equation \eqref{eq:platonicflag} becomes
	\begin{align}
		\mathbf k^\top\mathbf Q_\Phi \mathbf k=0\Leftrightarrow \mathbf b^\top\mathbf T^\top\mathbf Q_\Phi\mathbf T\mathbf b=0.
	\end{align} 
	It can be checked by direct computations that $\mathbf Q_{\{p,q\}}=4(1+\omega_p)(1+\omega_q)\mathbf T^\top\mathbf Q_\Phi\mathbf T$.
\end{proof}

\subsection{Matrix representations of the full symmetry groups and integrality conditions} 
The classic Apollonian group introduced by Hirst in \cite{hirst1967apollonian}, and studied for the first time from the number theoretical point of view by Graham et al. in \cite{apoGI}, is defined as the discrete orthogonal subgroup $\langle \mathbf S_1,\mathbf S_2,\mathbf S_3,\mathbf S_4\rangle<\mathrm{SL}_4(\mathbb Z)$ with respect to the Descartes quadratic form, where 
\begin{equation*}
	\mathbf{S}_1=\left(\begin{matrix}
		-1& 2 & 2 & 2 \\
		0 	&1  &0  &0  \\
		0	&0  & 1 & 0 \\
		0	&0  & 0 & 1
	\end{matrix}\right),\, 
	\mathbf{S}_2=\left(\begin{matrix}
		1& 0 & 0 & 0 \\
		2&2 & -1 & 2 \\
		0&0 & 1 & 0 \\
		0	&0  &0  & 1
	\end{matrix},\right),\, 
	\mathbf{S}_3=\left(\begin{matrix}
		1& 0 & 0 & 0 \\
		0&1  &0  &0  \\
		2&2 & -1 & 2 \\
		0	&0  &0  & 1
	\end{matrix},\right),\, 
	\mathbf{S}_4=\left(\begin{matrix}
		1& 0 & 0 & 0 \\
		0 	&1  &0  &0  \\
		0	&0  & 1 & 0 \\
		2	&2  & 2 & -1
	\end{matrix}\right).
\end{equation*}

The generating matrices, called \textit{bend matrices} in \cite{chait2020taxonomy}, give the linear relations on the bends under the action of the Apollonian group. We give a similar linear representation of the full symmetry group of each Platonic solid.

\begin{prop}\label{prop:groups}For each Platonic solid $\{p,q\}$, the full symmetry group $\Gamma_{\{p,q\}}$ admits a linear representation as
	a discrete orthogonal subgroup $\langle \mathbf R_1,\mathbf R_2,\mathbf R_3, \mathbf S_f\rangle< \mathrm{SL_4}(\mathbb Z[\omega_p,\omega_q])$ with respect to $\mathbf{Q}_{\{p,q\}}$, where
	\begin{align}
		\mathbf R_1&=\left(
		\begin{array}{cccc}
			0 & 1 & 0 & 0 \\
			1 & 0 & 0 & 0 \\
			\omega _p & -\omega _p & 1 & 0 \\
			\omega _p \left(\omega _p+\omega _q\right)+\omega _q & -\omega _p \left(\omega _p+\omega _q\right)-\omega _q & 0 & 1
			\\
		\end{array}
		\right)\\
		\mathbf R_2&=\left(
		\begin{array}{cccc}
			1 & 0 & 0 & 0 \\
			\omega _p & -\omega _p & 1 & 0 \\
			\omega _p \left(-1+\omega _p\right) & 1-\omega _p^2 & \omega _p & 0 \\
			\omega _p^2 \left(-1+\omega _p+\omega _q\right) & -\omega _p \left(1+\omega _p\right) \left(-1+\omega _p+\omega _q\right) & \omega _p \left(-1+\omega _p+\omega _q\right) & 1 \\
		\end{array}
		\right)\\
		\mathbf R_3&=\left(
		\begin{array}{cccc}
			1 & 0 & 0 & 0 \\
			0 & 1 & 0 & 0 \\
			\omega _q & \omega _p & -(\omega _p+\omega _q) & 1 \\
			\omega _q \left(-1+\omega _p+\omega _q\right) & \omega _p \left(-1+\omega _p+\omega _q\right) & 1-\left(\omega
			_p+\omega _q\right)^2 & \omega _p+\omega _q \\
		\end{array}
		\right)\\
		\mathbf S_f&=\left(
		\begin{array}{cccc}
			1 & 0 & 0 & 0 \\
			0 & 1 & 0 & 0 \\
			0 & 0 & 1 & 0 \\
			2 & 2 \left(1-\omega _p+\omega _q\right) & 2 \left(1+\omega _p+\omega _q\right) & -1 \\
		\end{array}
		\right)
	\end{align}
\end{prop}

\begin{proof}
	Let $\S_\P$ a Platonic circle packing where $\P=\{p,q\}$. We consider the following elements of $\S_\P$ with respect to a given flag $\Phi=(v,e,f,\P)$:
	\begin{enumerate}
		\item The circles $S_1,S_2,S_3,S_4\in\mathcal S_\P$  corresponding to the fundamental basis,
		\item the fundamental bend vector $(b_1,b_2,b_3,b_4)^\top$,
		\item the polytopal curvatures $\kappa_v,\kappa_e,\kappa_f,\kappa_\P$,
		\item the fundamental symmetries $r_1, r_2, r_3$ and the dual inversion $s_f$.
	\end{enumerate}
	
	From equations \eqref{eq:keke'} and \eqref{eq:kfkf'}, we have that each $r_i$ corresponds to the matrix $\mathbf R_i'$ where
	\begin{align}
		\mathbf R_1'\left(\begin{matrix}
			\kappa_v \\
			\kappa_e \\
			\kappa_f \\
			\kappa_\P
		\end{matrix}\right)=&
		\left(\begin{matrix}
			-\kappa_v+2\kappa_e \\
			\kappa_e \\
			\kappa_f \\
			\kappa_\P
		\end{matrix}\right)\\
		\mathbf R_2'\left(\begin{matrix}
			\kappa_v \\
			\kappa_e \\
			\kappa_f \\
			\kappa_\P
		\end{matrix}\right)=&
		\left(\begin{matrix}
			\kappa_v\\
			\frac{1+\omega_p}{2}\kappa_v-\kappa_e+\frac{3-\omega_p}{2}\kappa_f \\
			\kappa_f \\
			\kappa_\P
		\end{matrix}\right)\\
		\mathbf R_3'\left(\begin{matrix}
			\kappa_v \\
			\kappa_e \\
			\kappa_f \\
			\kappa_\P
		\end{matrix}\right)=&
		\left(\begin{matrix}
			\kappa_v\\
			\kappa_e\\
			\frac{2(1+\omega_q)}{3-\omega_p}\kappa_e-\kappa_f+\frac{2(2-\omega_p-\omega_q)}{3-\omega_p}\kappa_\P \\
			\kappa_\P
		\end{matrix}\right)
	\end{align}
	By conjugating $\mathbf R_1',\mathbf R_2',\mathbf R_3'$ with the transition matrix $\mathbf T$ described in \eqref{eq:transition}, we obtain the matrices  $\mathbf R_1,\mathbf R_2,\mathbf R_3$. Then, by resolving \eqref{eq:platonicdescartes} on $b_4$, we obtain
	\begin{align}\label{eq:curv3pm}
		b_4,b_4'=b_1+(1-\omega_p+\omega_q)b_2+(1+\omega_p+\omega_q)b_3\pm2\sqrt{(1+\omega_q)(b_1b_2+b_1b_3+b_2b_3-\omega_pb_2^2)}
	\end{align}
	where $b_4'$ is the bend of the circle $s_f(S_4)$. Since $s_f$ fixes $S_1,S_2,S_3$, then it acts on the bend vectors  of $\mathscr P(\S_\P)$ as
	\begin{align}
		\mathbf S_f\left(\begin{matrix}
			b_1 \\
			b_2 \\
			b_3 \\
			b_4
		\end{matrix}\right)=&
		\left(\begin{matrix}
			b_1\\
			b_2 \\
			b_3 \\
			2b_1+2(1-\omega_p+\omega_q)b_2+2(1+\omega_p+\omega_q)b_3-b_4
		\end{matrix}\right)
	\end{align}
	where the last row is obtained by expressing $b_4'$ after adding both solutions in \eqref{eq:curv3pm}. Since the fundamental symmetries and $s_f$ generate the full symmetry group $\Gamma_{\{p,q\}}$, the matrix group $\langle \mathbf R_1, \mathbf R_2,\mathbf R_3,\mathbf S_f\rangle$ is a linear representation of $\Gamma_{\{p,q\}}$.  The four matrices are in $\mathrm{SL}_4(\mathbb Z[\omega_p,\omega_q])$ and are orthogonal with respect to $\mathbf Q_{\{p,q\}}$.
\end{proof}

The set of bends of a Platonic circle packing can be obtained by the action of $\langle \mathbf R_1,\mathbf R_2,\mathbf R_3\rangle$ on a single fundamental bend vector. It can be equally obtained by applying the following linear relations, which can be deduced from Equations \eqref{eq:cycface},  \eqref{eq:cycpol}, \eqref{eq:curv3pm}.
\begin{cor}[Face relation]\label{cor:facerel} Let  $\S_\P$ be a Platonic circle packing with $\P=\{p,q\}$ and let $b_1,b_2,b_3,b_4$ be the bends of four circles corresponding to four consecutive vertices in a face of $\P$. Then,
	\begin{align}
		\label{eq:faceki}b_{4}=b_1-\omega_p(b_2-b_3).
	\end{align} 
\end{cor}	

\begin{cor}[Consecutive fundamental bases]\label{cor:fundarel} Let  $\S_\P$ be a Platonic circle packing with $\P=\{p,q\}$. If $(b_1,b_2,b_3,b_4)^\top$ and $(b_2,b_3,b_4,b_5)^\top$ are two fundamental bend vectors of   $\S_\P$ then
	\begin{align}
	\label{eq:polki}	
	b_5= b_1-(\omega_p+\omega_q)(b_2-b_4).
\end{align}
\end{cor}	

\begin{cor}[Dual inversion]\label{cor:polrel} Let  $\S_\P$  be a Platonic circle packings with $\P=\{p,q\}$. If $(b_1,b_2,b_3,b_4)^\top$ and $(b_1,b_2,b_3,b_4')^\top$ are two fundamental bend vectors of $\S_\P$ and $\S_\P'$, respectively, where $\mathcal S_{\P}'$ is obtained from $\mathcal S_{\P}$ by the inversion through the dual circle which is orthogonal to the circles corresponding to $b_1,b_2,b_3$, then
\begin{align}
	\label{eq:kappa4}	b_4'=2b_1+2(1-\omega_p+\omega_q)b_2+2(1+\omega_p+\omega_q)b_3-b_4.
\end{align}
\end{cor}

Similarly, the set of bends of a Platonic crystallographic packing can be obtained by the action of $\langle \mathbf R_1,\mathbf R_2,\mathbf R_3,\mathbf S_f\rangle$ on a single fundamental bend vector. In this case, this set is fully determinated by three bends. We denote by $\mathscr P_{\{p,q\}}(b_1,b_2,b_3)$ the Platonic crystallographic packing where $b_1,b_2,b_3$ are three consecutive entries of a fundamental bend vector.
We say that $\mathscr P_{\{p,q\}}(b_1,b_2,b_3)$  is $\mathbb Z[\omega_p,\omega_q]$-\textit{integral} (or simply \textit{integral} if $\mathbb Z[\omega_p,\omega_q]=\mathbb Z$) if its set of bends is contained in $\mathbb Z[\omega_p,\omega_q]$. By combining equation \eqref{eq:curv3pm} with the action of the matrices in Proposition \ref{prop:groups}, we obtain the following.

\begin{cor}[Integrality condition]\label{cor:integrality} Let $b_1,b_2,b_3$ be three consecutive entries of a fundamental bend vector of the Platonic crystallographic packing $\mathscr P_{\{p,q\}}(b_1,b_2,b_3)$. If $b_1,b_2,b_3,\sqrt{\Delta_{\{p,q\}}}$ are in $\mathbb Z[\omega_p,\omega_q]$ where
	\begin{align}\label{eq:integralitycond}\Delta_{\{p,q\}}:=(1+\omega_q)(b_1b_2+b_2b_3+b_3b_1-\omega_pb_2^2)
	\end{align}
	then $\mathscr P_{\{p,q\}}(b_1,b_2,b_3)$ is $\mathbb Z[\omega_p,\omega_q]$-integral.
\end{cor}

\begin{cor}
	For every Platonic solid $\{p,q\}$, $\mathscr P_{\{p,q\}}(0,0,1)$ is $\mathbb Z[\omega_p,\omega_q]$-integral. Moreover, the set of bends of $\mathscr P_{\{3,3\}}(0,0,1)$, $\mathscr P_{\{4,3\}}(0,0,1)$ and $\mathscr P_{\{5,3\}}(0,0,1)$ contains the sequence of perfect squares.
\end{cor}

\begin{proof}\label{cor:squares} 
	The integrality follows from Corollary \ref{cor:integrality}. From \eqref{eq:curv3pm}, if $b_1=b_2=0$ and $b_3=1$, then $b_4=1+\omega_p+\omega_q$. Let $\mathbf b_0:=(0,0,1,1+\omega_p+\omega_q)^\top$ and let, for every $n\ge0$, $\mathbf M_n:=\mathbf R_2 (\mathbf R_3\mathbf S_f)^n\mathbf R_2$ and $\mathbf b_n=(b_1^{(n)},b_2^{(n)},b_3^{(n)},b_4^{(n)})^\top:=(\mathbf M_n \mathbf b_0)^\top$.  It can be proved by induction on $n$ that $b_2^{(n)}=n^2(1+\omega_q)$. Therefore, for $q=3$, $b_2^{(n)}=n^2$.
\end{proof}
The integral packings $\mathscr P_{\{p,q\}}(0,0,1)$ are illustred in Figures \ref{fig:apotetra}, \ref{fig:apoct}, \ref{fig:apocube}, \ref{fig:apoico} and \ref{fig:apodode}. We can use the previous two corollaries to parametrize the triples satisfying the integrality condition. To do so, we must solve the Diophantine equation derived from \eqref{eq:integralitycond}. 
\begin{align}\label{eq:diophantine}
-x_0^2+	(1+\omega_q)(x_1x_2+x_2x_3+x_3x_1-\omega_px^2_2)=0
\end{align}
for $x_0,x_1,x_2,x_3\in\mathbb Z[\omega_p,\omega_q]$. Since $\mathbb Z[\omega_p,\omega_q]$ has class number 1 and the above equation is homogeneous of degree 2, we can parameterize the integer solutions  by standard methods.
\begin{cor}\label{cor:parametrization}
	For every Platonic solid $\{p,q\}$, the triples in $\mathbb Z[\omega_p,\omega_q]^3$ satisfying \eqref{eq:integralitycond} are given by $(b_1,b_2,b_3)=\frac kg(h_1, h_2, h_3)$ where
	\begin{align}\label{eq:hs}
	h_1=(1+\omega_q)(t_2+t_3)t_2,&&h_2=(1+\omega_q)(t_2+t_3)t_3,&&h_3=t_1^2-(1+\omega_q)(t_2-\omega_p t_3)t_3,
	\end{align}
	 $t_1,t_2,t_3\in\mathbb Z[\omega_p,\omega_q]$ are coprimes and $k\in\mathbb Z[\omega_p,\omega_q]$ coprime with $g=\mathrm{gcd}(h_0,h_1,h_2,h_3)$ with $h_0=(1+\omega_q)(t_2+t_3)t_1$.
\end{cor}
\begin{proof}
	Solving \eqref{eq:diophantine} in $\mathbb Z[\omega_p,\omega_q]^4$ is equivalent to find all points in $\mathbb Q[\omega_p,\omega_q]^4$ contained in the projective hypersurface defined by
	$$\mathcal H=\{Q(x_0,x_1,x_2,x_3)=-x_0^2+	(1+\omega_q)(x_1x_2+x_2x_3+x_3x_1-\omega_px^2_2)=0\}$$
	The point $P=(0,0,0,1)\in\mathcal H$ and we can find the other rational points by intersecting $\mathcal H$ with lines passing through $P$ with rational parameters. We consider the affine quadric surface $\{R(x_0,x_1,x_2)=Q(x_0,x_1,x_2,1)=0\}$
	which has the rational point $O=(0,0,0)$. The parametric equation of a line passing through $O$ is given by taking $x_1=t_2 x_0$ and $x_2=t_3 x_0$. Then we have that
	$$\frac{R(x_0,t_2 x_0,t_3 x_0)}{x_0}=-x_0+(1+\omega_q)(t_2t_3x_0+t_2+t_3-\omega_pt_3^2x_0)$$
	By solving the right-hand side of the above equation for $x_0$, and then using the equalities $x_1=t_2 x_0$ and $x_2=t_3 x_0$, we obtain $x_0=f_0(t_2,t_3)/f_3(t_2,t_3)$, $x_1=f_1(t_2,t_3)/f_3(t_2,t_3)$ and $x_2=f_2(t_2,t_3)/f_3(t_2,t_3)$, where
		\begin{align*}
		&	f_0(t_2,t_3)=(1+\omega_q)(t_2+t_3)\\
		&	f_1(t_2,t_3)=(1+\omega_q)(t_2+t_3)t_2\\
		&	f_2(t_2,t_3)=(1+\omega_q)(t_2+t_3)t_3\\
		&	f_3(t_2,t_3)=1-(1+\omega_q)(t_2t_3-\omega_p t_3^2)
	\end{align*}
	This gives us a parameterization of $\mathcal H$ with quadratic polynomials with  coefficients in $\mathbb Z[\omega_p,\omega_q]$ by taking
				\begin{align*}
		&	x_i=t_1^2f_i\left(\frac {t_2}{t_1},\frac{t_3}{t_1}\right)=h_i
	\end{align*}
	for every $i=0,1,2,3$.
A point in $\mathcal{H}$ belongs to $\mathbb{Q}[\omega_p,\omega_q]^4$ if and only if it can be generated from values $t_1, t_2, t_3 \in \mathbb{Q}[\omega_p, \omega_q]$. Since $h_0, h_1, h_2, h_3$ are homogeneous, the point remains unchanged when all the $t_i$ are multiplied by the same element in $\mathbb{Q}[\omega_p, \omega_q]$. Therefore, we can assume that $t_1, t_2, t_3$ are coprime integers in $\mathbb{Z}[\omega_p, \omega_q]$. As a result, the integer solutions to the Diophantine equation \eqref{eq:diophantine} are precisely the sequences $\frac{k}{g}(h_0, h_1, h_2, h_3)$, where $k \in \mathbb{Z}[\omega_p, \omega_q]$ and $g = \gcd(h_0, h_1, h_2, h_3)$.
\end{proof}

We will use the parameterization of Corollary \ref{cor:parametrization} to find generating triples for primitive and integral Platonic crystallographic packings, i.e. where the gcd of all the bends is $1$ (see
Figures \ref{fig:apotetra}, \ref{fig:apoct}, \ref{fig:apocube}, \ref{fig:apoico} and \ref{fig:apodode}). This is achieved by setting $k=1$ in the given parameterization. Note that different triples can produce the same packing, so this method alone cannot be used to count distinct primitive packings. For methods of enumeration, refer to \cite{apoNumber}.

\printbibliography[
title={References}
] 

\appendix

\section{The Platonic crystallographic packings}\label{sec:appendix}
\subsection{Tetrahedron $\{3,3\}$} This is the classical case which has been extensively studied \cite{apoGI}.
 In Figure \ref{fig:canonicaltetra}, we show three tetrahedral circle packings obtained by the arrangement projections of canonical tetrahedra. The canonical length is $\sqrt2$.
 \vspace{-.5cm}

\begin{figure}[H]
\begin{tikzpicture}
	\begin{scope}[yshift=4.2cm]
		\node at (-5.75,0) {\includegraphics[align=c,width=4.4cm]{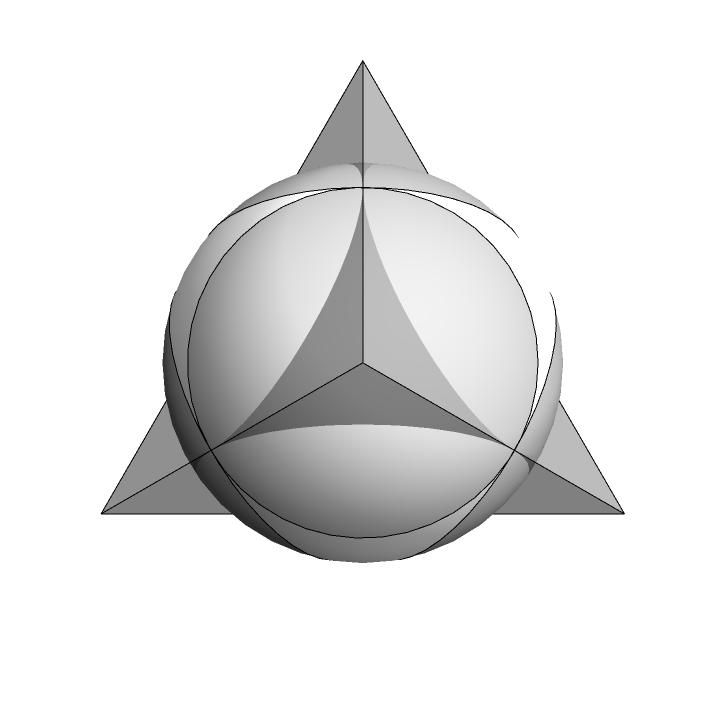} };
		\node at (0,0) {\includegraphics[align=c,width=4.4cm]{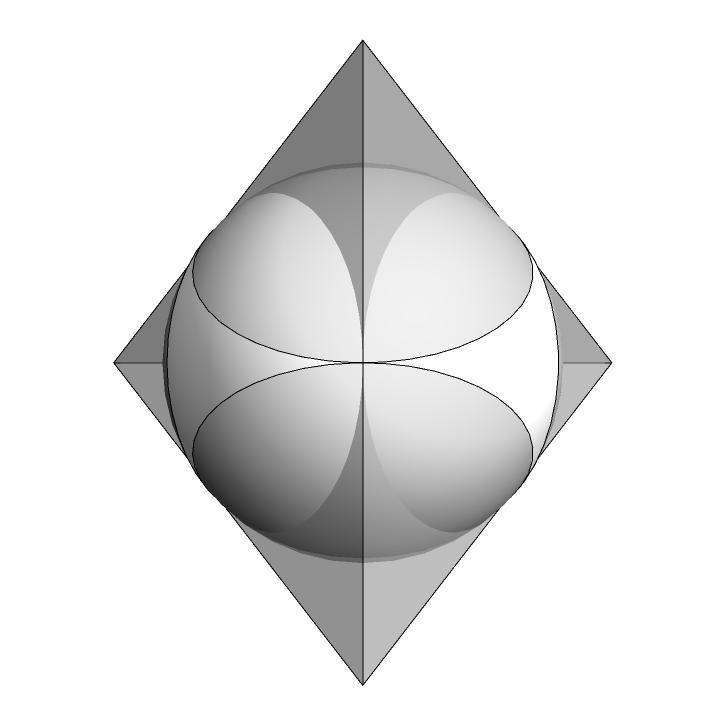}  };
		\node at (5.75,0) {\includegraphics[align=c,width=4.4cm]{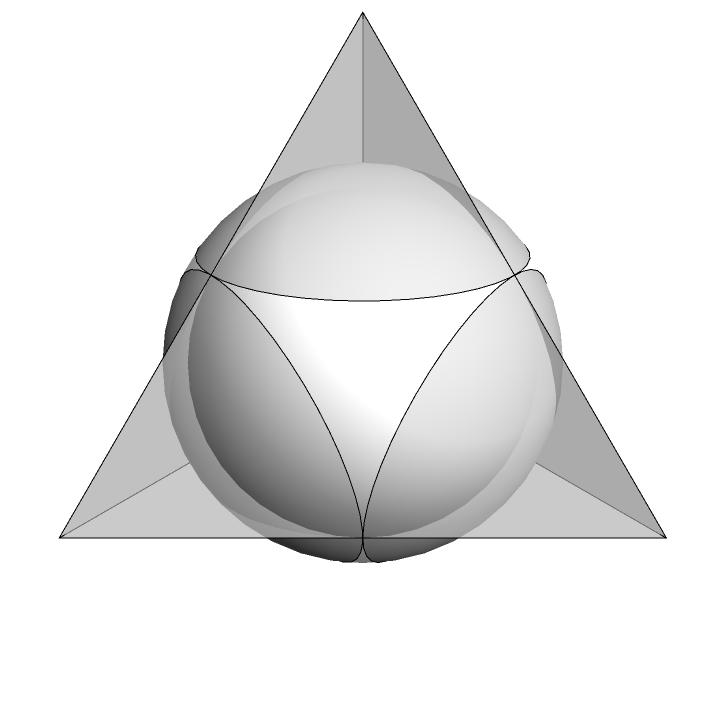}  };
	\end{scope}
	\begin{scope}
		\node at (-5.75,0) {\includestandalone[align=c,height=4cm]{tikzs/projections/VCtetrahedron}};
		\node at (0,0) {\includestandalone[align=c,height=4cm]{tikzs/projections/ECtetrahedron} };
		\node at (5.75,0) {\includestandalone[align=c,height=4cm]{tikzs/projections/FCtetrahedron}};
	\end{scope}
\end{tikzpicture}
\vspace{-.5cm}
	\caption{
	(Top figures, from left to right) A canonical tetrahedron with its spherical illuminated regions viewed from above. The images show three orientations: a vertex, an edge, and a face centred at the North Pole. (Bottom figures) The corresponding tetrahedral circle packings given by the arrangement projection.
	}
\label{fig:canonicaltetra}	
\end{figure}
 The Gramian is equal to the Descartes quadratic form and the Möbius spectrum of the tetrahedron is $(-2_{(1)},2_{(3)})$.
The linear representation of the full symmetry group $\Gamma_{\{3,3\}}<\mathrm{SL}_4(\mathbb Z)$ is generated by the following matrices
\begin{align*}
	&\mathbf{R}_1=\left(\begin{array}{cccc}
		0& 1 & 0 & 0 \\
		1 &0  &0  &0  \\
		0	&0  & 1 & 0 \\
		0	&0  &0  & 1
	\end{array}\right) &  
	\mathbf{R}_2=\left(\begin{array}{cccc}
		1& 0 & 0 & 0 \\
		0 &0  &1  &0  \\
		0	&1 & 0 & 0 \\
		0	&0  &0  & 1
	\end{array}\right) \\
	&	\mathbf{R}_3=\left(\begin{array}{cccc}
		1& 0 & 0 & 0 \\
		0 &1  &0  &0  \\
		0	&0 & 0 & 1 \\
		0	&0  &1 & 0
	\end{array}\right)&
	\mathbf{S}_f=\left(\begin{array}{cccc}
		1& 0 & 0 & 0 \\
		0 	&1  &0  &0  \\
		0	&0  & 1 & 0 \\
		2	&2  & 2 & -1
	\end{array}\right)
\end{align*}
The action by conjugation of $\langle\mathbf R_1 ,\mathbf R_2 ,\mathbf R_3\rangle $ on $\mathbf S_f$ gives the four bend matrices which generate the Apollonian group. The tetrahedral quadratic form 
coincides with the Descartes quadratic form
\begin{align}\mathbf Q_{\{3,3\}}=
	\left(
	\begin{array}{cccc}
		1 & -1 & -1 & -1 \\
		-1 & 1 & -1 & -1 \\
		-1 & -1 & 1 & -1 \\
		-1 & -1 & -1 & 1 \\
	\end{array}
	\right)
\end{align}
which implies the following relations on any fundamental bend vector $\mathbf b=(b_1,b_2,b_3,b_4)^\top$ of a tetrahedral circle packing $\S_\P$: 
\begin{enumerate}[-]
	\item (Descartes' theorem) for any fundamental bend vector $\mathbf b=(b_1,b_2,b_3,b_4)^\top$, 
	\begin{align}
		(b_1+b_2+b_3+b_4)^2= 2(b_1^2+b_2^2+b_3^2+b_4^2)
	\end{align}
		\item  (Dual inversion) if $\mathbf b'=(b_1,b_2,b_3,b_4')^\top$ is the fundamental bend vector of $\S'_\P$ obtained from $\S_\P$ after applying the inversion through the dual sphere orthogonal to $(b_1,b_2,b_3)$, then
%
	\begin{align}
		b_4+b_4'=2(b_1+b_2+b_3)
	\end{align}
	
	\item  (Integrality condition) if $b_1,b_2,b_3,\Delta_{\{3,3\}}\in\mathbb Z$ where
	\begin{align}\label{eq:inttetra}
		\Delta_{\{3,3\}}=b_1b_2+b_2b_3+b_3b_1
	\end{align}
	then the tetrahedral crystallographic packing $\mathscr P_{\{3,3\}}(b_1,b_2,b_3)$ is integral. The primitive triples satisfying the previous condition are paremeterized by $(b_1,b_2,b_3)=\frac 1g(h_1, h_2, h_3)$ where
	\begin{align}
		h_1=t_2(t_2+t_3),&&h_2=t_3(t_2+t_3),&&h_3=t_1^2-t_2t_3
	\end{align}
	 $t_1,t_2,t_3$ are three coprime integers and $g=\mathrm{gcd}((t_1(t_2+t_3),h_1,h_2,h_3)$.
	The two integral packings of Figure  \ref{fig:apotetra} are generated by taking $(t_1,t_2,t_3)=(1,0,0),(1,2,-4)$.
\end{enumerate}
\vspace{-.5cm}
\begin{figure}[H]
	\centering
\begin{tikzpicture}
	\begin{scope}
		\node at (0,0) {	\includegraphics[align=c,trim=0 70 0 70,clip,align=c,width=0.48\textwidth]{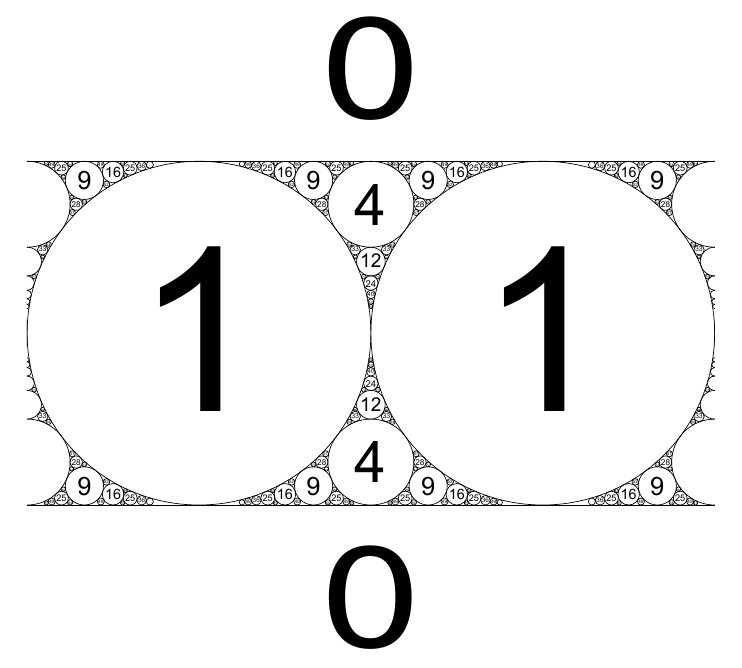}	}; 
	\end{scope}
	\begin{scope}[xshift=8cm]
		\clip (0,0)  circle (.2\textwidth) ;
		\node[anchor=center] at (-.5,-.5) {	\includegraphics[trim=0 20 40 60,clip,align=c,width=0.5\textwidth]{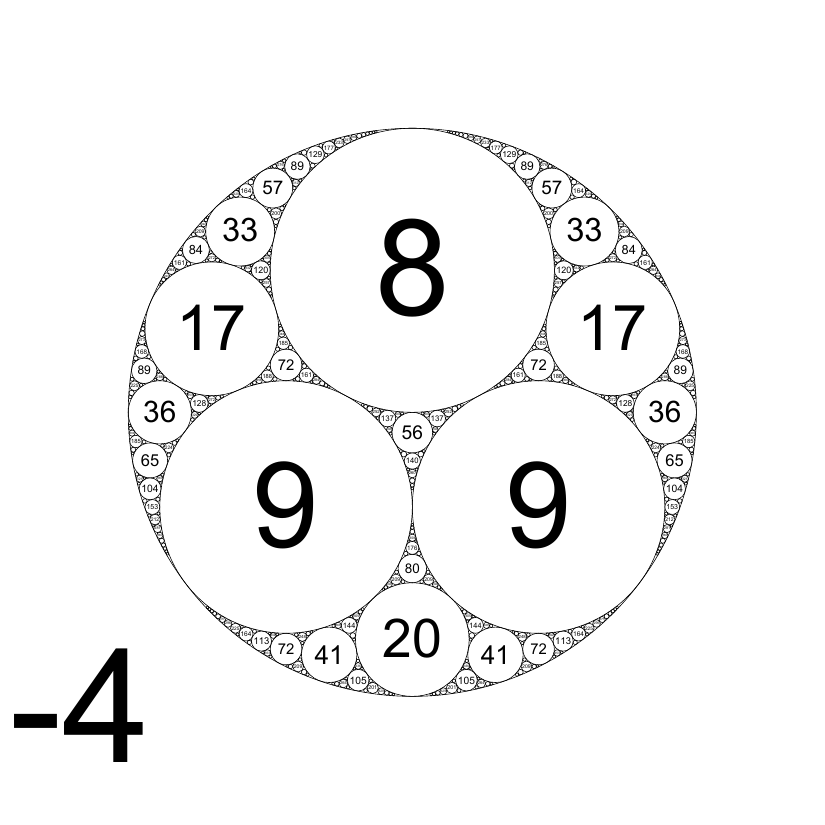}	}; 
	\end{scope}
\end{tikzpicture}
	\caption{The integral tetrahedral crystallographic packings $\mathscr P_{\{3,3\}}(0,0,1)$ (left) and $\mathscr P_{\{3,3\}}(-4,8,9)$ (right).}
	\label{fig:apotetra}
\end{figure}

\subsection{Octahedron $\{3,4\}$} Octahedral circle packings appear in the works of Boyd in \cite{boyd}, Guettler and Mallows \cite{guettler}, Zhang \cite{Zhang+2018+71+110} and Lautzenheiser \cite{lautzenheiser2024residual}. In Figure \ref{fig:canonicaloct}, we show three octahedral circle packings obtained by the arrangement projections of three octahedra. The canonical length is $1$.

\begin{figure}[H]
	\begin{tikzpicture}
		\begin{scope}[yshift=3.7cm]
			\node at (-5.75,0) {\includegraphics[trim=0 0 0 40,clip,align=c,width=4.4cm]{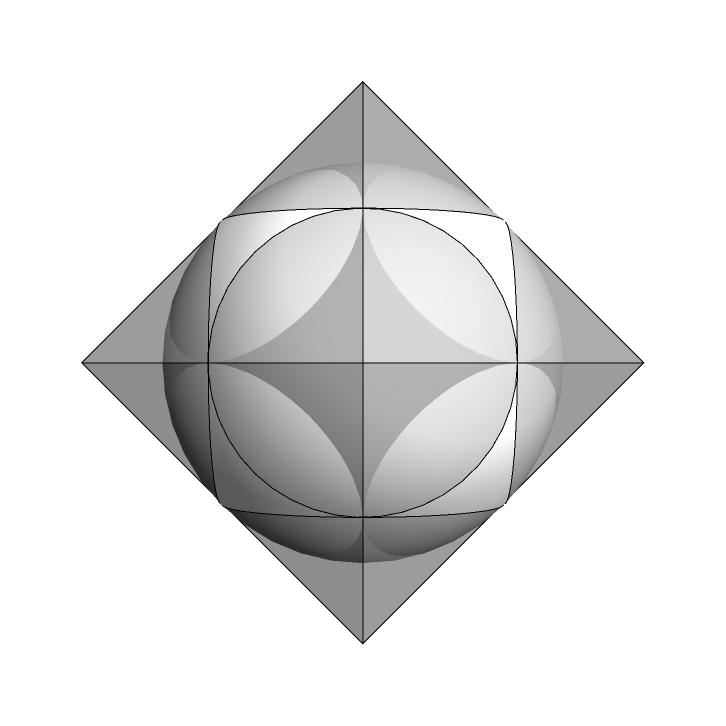} };
			\node at (0,0) {\includegraphics[trim=0 0 0 40,clip,align=c,width=4.4cm]{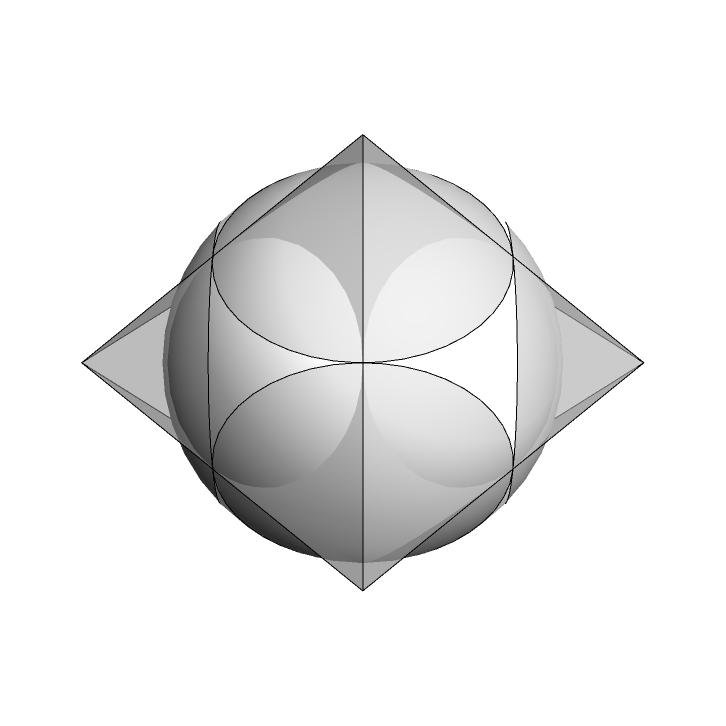}  };
			\node at (5.75,0) {\includegraphics[trim=0 0 0 40,clip,align=c,width=4.4cm]{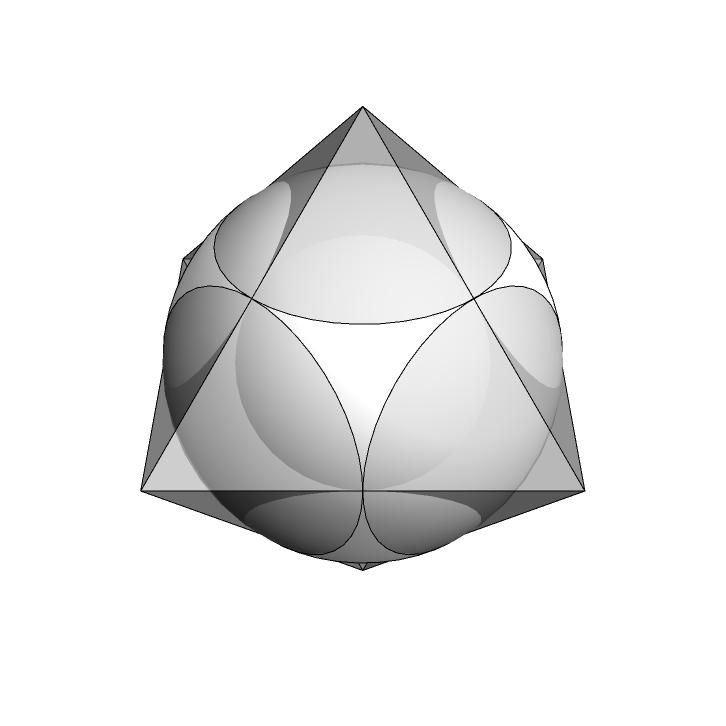}  };
		\end{scope}
		\begin{scope}
			\node at (-5.75,0) {\includestandalone[align=c,height=4cm]{tikzs/projections/VCoctahedron}};
			\node at (0,0) {\includestandalone[align=c,height=4cm]{tikzs/projections/ECoctahedron} };
			\node at (5.75,0) {\includestandalone[align=c,height=4cm]{tikzs/projections/FCoctahedron}};
		\end{scope}
	\end{tikzpicture}
	\vspace{-.5cm}
	\caption{
			(Top figures, from left to right) A canonical octahedron with its spherical illuminated regions viewed from above. The images show three orientations: a vertex, an edge, and a face centred at the North Pole. (Bottom figures) The corresponding octahedral circle packings given by the arrangement projection.}
	\label{fig:canonicaloct}	
\end{figure}

 The Gramian can be computed from Table \ref{tab:distoct} and the Möbius spectrum is $(-4_{(1)},0_{(2)},6_{(3)})$.
\begin{table}[H]
	\begin{tabular}{c|cccccc}
		\rule[-1ex]{0pt}{2.5ex} Graph distance & 0 & 1 & 2  \\
		\hline
		\rule[-1ex]{0pt}{2.5ex} Inversive product & 1 & $-1$ & $-3$ \\
	\end{tabular}
	\caption{The inversive product compared to the graph-distance of octahedral circle packings.}
	\label{tab:distoct}
\end{table}

The linear representation of the full symmetry group $\Gamma_{\{3,4\}}<\mathrm{SL}_4(\mathbb Z)$ is generated by the matrices
\begin{align*}
	\mathbf{R}_1=\left(\begin{array}{cccc}
		0& 1 & 0 & 0 \\
		1 &0  &0  &0  \\
		0	&0  & 1 & 0 \\
		1	&-1  &0  & 1
	\end{array}\right) &&
	\mathbf{R}_2=\left(\begin{array}{cccc}
		1& 0 & 0 & 0 \\
		0 &0  &1  &0  \\
		0	&1 & 0 & 0 \\
		0	&0  &0  & 1
	\end{array}\right) \\
	\mathbf{R}_3=\left(\begin{array}{cccc}
		1& 0 & 0 & 0 \\
		0 &1  &0  &0  \\
		1	&0 & -1 & 1 \\
		0	&0  &0 & 1
	\end{array}\right)&&
	\mathbf{S}_f=\left(\begin{array}{cccc}
		1& 0 & 0 & 0 \\
		0 	&1  &0  &0  \\
		0	&0  & 1 & 0 \\
		2	&4  & 4 & -1
	\end{array}\right)
\end{align*}

The matrix of the octahedral quadratic form is 
\begin{align}\mathbf Q_{\{3,4\}}=
	\left(
	\begin{array}{cccc}
		1 & -2 & -2 & -1 \\
		-2 & 4 & 0 & -2 \\
		-2 & 0 & 4 & -2 \\
		-1 & -2 & -2 & 1 \\
	\end{array}
	\right)
\end{align}
which implies the following relations on any fundamental bend vector $\mathbf b=(b_1,b_2,b_3,b_4)^\top$ of an octahedral circle packing $\S_\P$:
\begin{enumerate}[-]
	\item (Octahedral Descartes' theorem) 
	\begin{align}
	(b_1-b_4)^2+(b_1-2b_2+b_4)^2+(b_1-2b_3+b_4)^2=2(b_1+b_4)^2
	\end{align}
	\item (Consecutive fundamental bases) If $(b_2,b_3,b_4,b_5)^\top$ is the fundamental bend vector of $\S_\P$ whose first three entries are the last three of $\mathbf b$, then
	\begin{align}
		b_5=b_1-b_2+b_4
	\end{align}
	\item (Dual inversion) if $\mathbf b'=(b_1,b_2,b_3,b_4')^\top$ is the fundamental bend vector of $\S'_\P$ obtained from $\S_\P$ after applying the inversion through the dual sphere orthogonal to $(b_1,b_2,b_3)$, then
	%
	\begin{align}
		b_4'=2b_1+4b_2+4b_3-b_4
	\end{align}
	
	\item  (Integrality condition) if $b_1,b_2,b_3,\Delta_{\{3,4\}}\in\mathbb Z$ where
	\begin{align}\label{eq:inttetra}
		\Delta_{\{3,4\}}=2(b_1b_2+b_2b_3+b_3b_1)
	\end{align}
	then the octahedral crystallographic packing $\mathscr P_{\{3,4\}}(b_1,b_2,b_3)$ is integral. The primitive triples satisfying the previous condition are paremeterized by $(b_1,b_2,b_3)=\frac 1g(h_1, h_2, h_3)$ where
	\begin{align}
		h_1=2t_2(t_2+t_3),&&h_2=2t_3(t_2+t_3),&&h_3=t_1^2-2t_2t_3
	\end{align}
	$t_1,t_2,t_3$ are three coprime integers and $g=\mathrm{gcd}(2t_1(t_2+t_3),h_1,h_2,h_3)$.
	The two integral packings of Figure  \ref{fig:apoct} are generated by taking $(t_1,t_2,t_3)=(1,0,0),(1,1,-2)$.
\end{enumerate}
\begin{figure}[H]
	\centering
	\begin{tikzpicture}
		\begin{scope}
			\node at (0,0) {\includegraphics[align=c,trim=0 80 0 80,clip,width=0.5\textwidth]{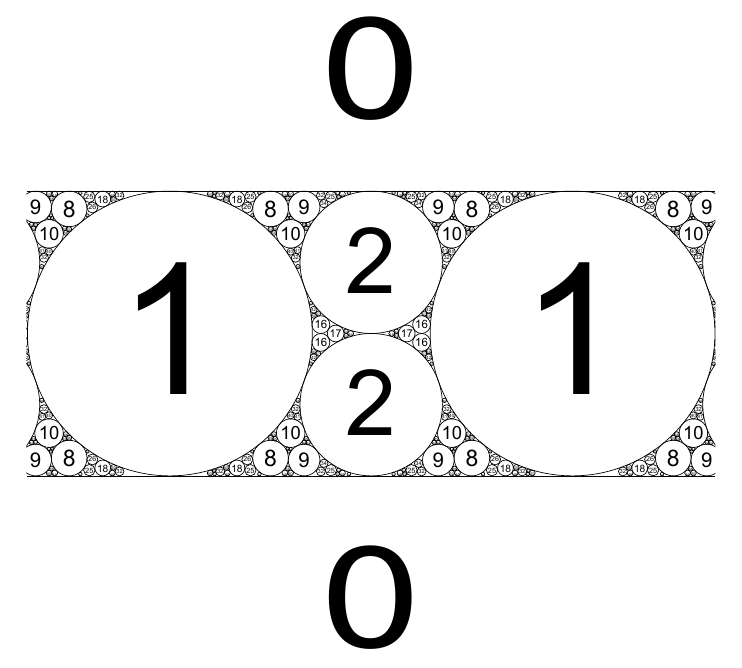}	}; 
		\end{scope}
		\begin{scope}[xshift=8cm]
			\clip (0,0)  circle (.2\textwidth) ;
			\node[anchor=center] at (-.5,-.5) {	\includegraphics[align=c,trim=0 20 40 60,clip,width=0.48\textwidth]{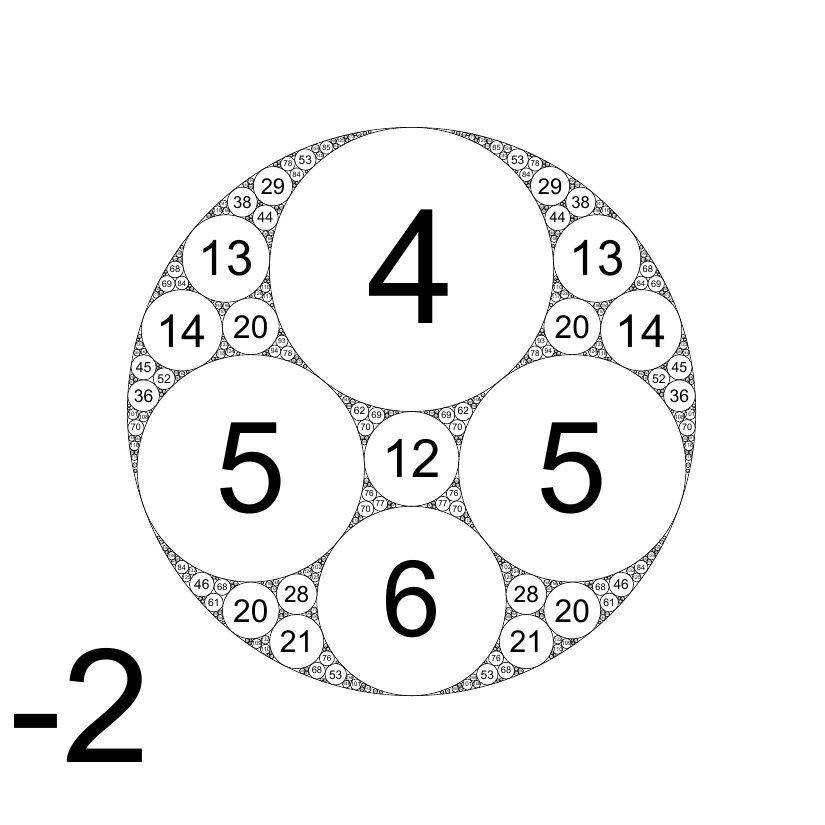}	}; 
		\end{scope}
	\end{tikzpicture}
	\caption{The integral octahedral crystallographic packings $\mathscr P_{\{3,4\}}(0,0,1)$ (left) and $\mathscr P_{\{3,4\}}(-2,4,5)$ (right).	}
	\label{fig:apoct}
\end{figure}

\subsection{Cube $\{4,3\}$} 

Cubic circle packings were studied by Stange in \cite{stange2015bianchi} as a particular case of Schmidt arrangements. In Figure \ref{fig:canonicalcube}, we show three cubic circle packings obtained by the arrangement projection of canonical cubes. The canonical length is $1/\sqrt 2$.
\begin{figure}[H]
	\begin{tikzpicture}
		\begin{scope}[yshift=3.3cm]
			\node at (-5.75,0) {\includegraphics[trim=0 0 0 60,clip,align=c,width=4.4cm]{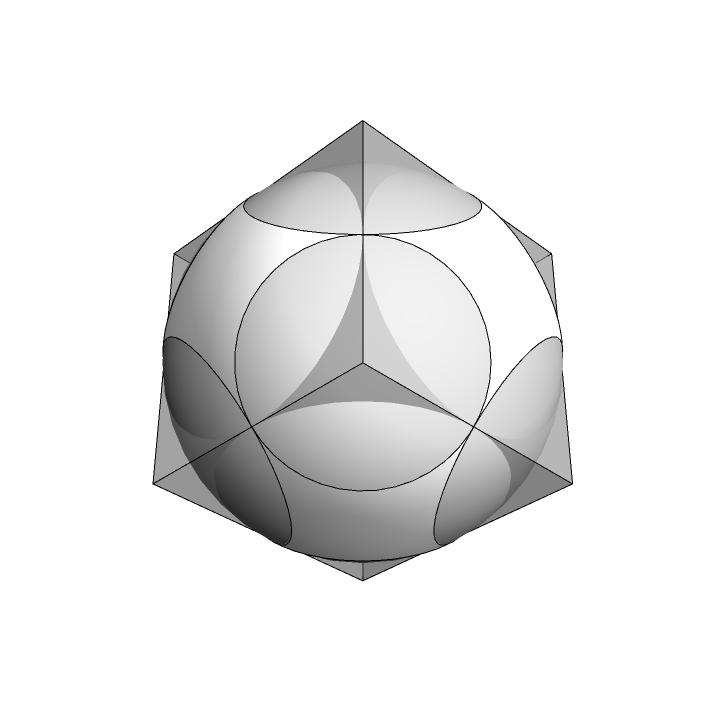} };
			\node at (0,0) {\includegraphics[trim=0 0 0 60,clip,align=c,width=4.4cm]{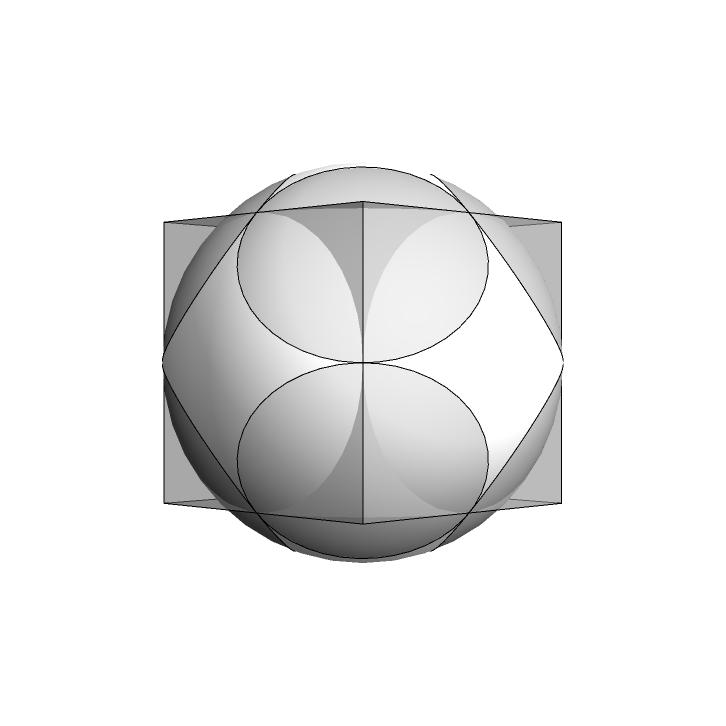}  };
			\node at (5.75,0) {\includegraphics[trim=0 0 0 60,clip,align=c,width=4.4cm]{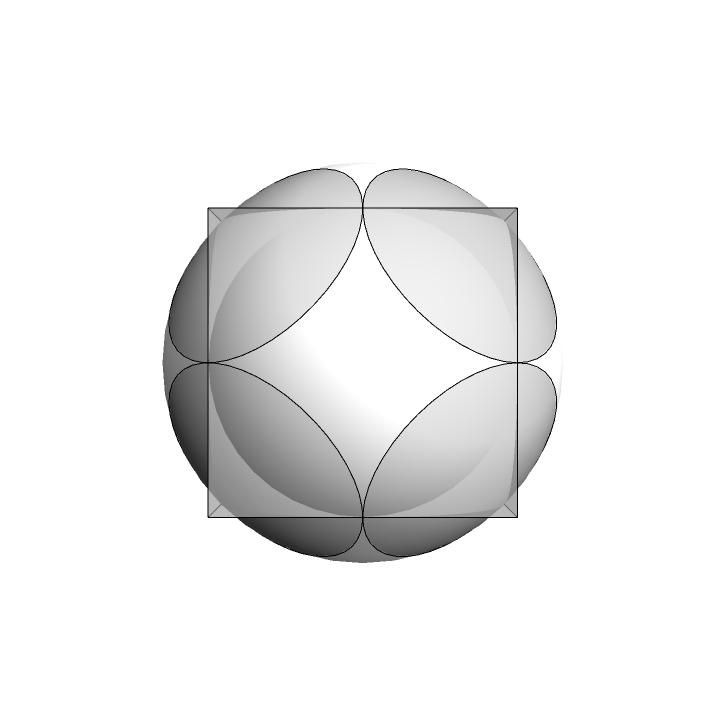}  };
		\end{scope}
		\begin{scope}
			\node at (-5.75,0) {\includestandalone[align=c,height=4cm]{tikzs/projections/VCcube}};
			\node at (0,0) {\includestandalone[align=c,height=4cm]{tikzs/projections/ECcube} };
			\node at (5.75,0) {\includestandalone[align=c,height=4cm]{tikzs/projections/FCcube}};
		\end{scope}
	\end{tikzpicture}
	\caption{
			(Top figures, from left to right) A canonical cube with its spherical illuminated regions viewed from above. The images show three orientations: a vertex, an edge, and a face centred at the North Pole. (Bottom figures) The corresponding cubic circle packings given by the arrangement projection.
		}
	\label{fig:canonicalcube}	
\end{figure}

The Gramian can be computed from Table \ref{tab:distcube} and the Möbius spectrum is $(-16_{(1)},0_{(4)},8_{(3)})$.
\begin{table}[H]
	\begin{tabular}{c|cccccc}
		\rule[-1ex]{0pt}{2.5ex} Graph distance & 0 & 1 & 2 & 3 \\
		\hline
		\rule[-1ex]{0pt}{2.5ex} Inversive product & 1 & $-1$ & $-3$ & $-5$ 
	\end{tabular}
	\caption{The inversive product compared to the graph-distance of cubic circle packings.}
	\label{tab:distcube}
\end{table}

The linear representation of the full symmetry group $\Gamma_{\{4,3\}}<\mathrm{SL}_4(\mathbb Z)$ is generated by the matrices
\begin{align*}
	&\mathbf{R}_1=\left(\begin{array}{cccc}
		0 & 1 & 0 & 0 \\
		1 & 0 & 0 & 0 \\
		1 & -1 & 1 & 0 \\
		1 & -1 & 0 & 1 \\
	\end{array}\right) &
	\mathbf{R}_2=\left(\begin{array}{cccc}
		1 & 0 & 0 & 0 \\
		1 & -1 & 1 & 0 \\
		0 & 0 & 1 & 0 \\
		0 & 0 & 0 & 1 \\
	\end{array}\right) \\
	&\mathbf{R}_3=\left(\begin{array}{cccc}
		1 & 0 & 0 & 0 \\
		0 & 1 & 0 & 0 \\
		0 & 1 & -1 & 1 \\
		0 & 0 & 0 & 1 \\
	\end{array}\right) &
	\mathbf{S}_f=\left(\begin{array}{cccc}
		1& 0 & 0 & 0 \\
		0 	&1  &0  &0  \\
		0	&0  & 1 & 0 \\
		2	&0 & 4 & -1
	\end{array}\right)
\end{align*}

The matrix of the cubic quadratic form is 
\begin{align}\mathbf Q_{\{4,3\}}=
	\left(
	\begin{array}{cccc}
		1 & -2 & 0 & -1 \\
		-2 & 4 & -2 & 0 \\
		0 & -2 & 4 & -2 \\
		-1 & 0 & -2 & 1 \\
	\end{array}
	\right)
\end{align}
which implies the following relations on any fundamental bend vector $\mathbf b=(b_1,b_2,b_3,b_4)^\top$ of a cubic circle packing $\S_\P$:
\begin{enumerate}[-]
	\item (Cubic Descartes' theorem) 
	\begin{align}
		2(b_1-b_2)^2+2(b_2-b_3)^2+2(b_3-b_4)^2=(b_1+b_4)^2
	\end{align}
	\item (Face relation) if $b_1,b_2,b_3,b_4$ are the bends of four circles of  $\S_\P$ corresponding to four consecutive vertices in a square face, then
	\begin{align}
		b_4=b_1-b_2+b_3
	\end{align}
	\item (Consecutive fundamental bases) If $(b_2,b_3,b_4,b_5)^\top$ is the fundamental bend vector of $\S_\P$ whose first three entries are the last three of $\mathbf b$, then
	\begin{align}
		b_5=b_1-b_2+b_4
	\end{align}
	\item (Dual inversion) if $\mathbf b'=(b_1,b_2,b_3,b_4')^\top$ is the fundamental bend vector of $\S'_\P$ obtained from $\S_\P$ after applying the inversion through the dual sphere which is orthogonal to the circles corresponding to $(b_1,b_2,b_3)$, then
	\begin{align}
		b_4'=2b_1+4b_2+b_3-b_4
	\end{align}
	\item  (Integrality condition) if $b_1,b_2,b_3,\Delta_{\{4,3\}}\in\mathbb Z$ where
	\begin{align}\label{eq:intcube}
		\Delta_{\{4,3\}}=b_1b_2+b_2b_3+b_3b_1-b_2^2
	\end{align}
	then the cubic crystallographic packing $\mathscr P_{\{4,3\}}(b_1,b_2,b_3)$ is integral. The primitive triples satisfying the previous condition are paremeterized by $(b_1,b_2,b_3)=\frac 1g(h_1, h_2, h_3)$ where
	\begin{align}
		h_1=2t_2^2+2t_2t_3,&&h_2=2t_2t_3+2t_3^2,&&h_3=t_1^2-2t_2t_3
	\end{align}
$t_1,t_2,t_3$ are three coprime integers and $g=\mathrm{gcd}(t_1(t_2+t_3),h_1,h_2,h_3)$.	The two integral packings of Figure  \ref{fig:apocube} are generated by taking $(t_1,t_2,t_3)=(1,0,0),(0,-1,4)$.
\end{enumerate}

\begin{figure}[H]
	\centering
	\begin{tikzpicture}
		\begin{scope}
			\node at (0,0) {\includegraphics[align=c,trim=0 80 0 80,clip,width=0.45\textwidth]{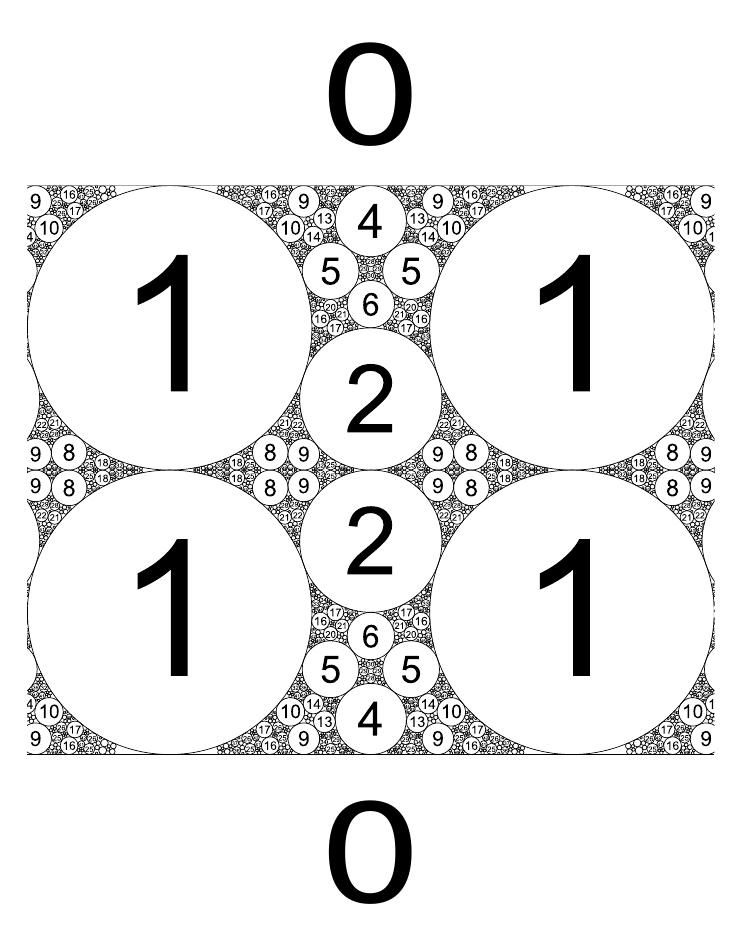}	}; 
		\end{scope}
		\begin{scope}[xshift=8cm,yshift=.1cm]
			\clip (0,0)  circle (.2\textwidth) ;
			\node[anchor=center] at (-.5,0) {	\includegraphics[align=c,trim=0 20 40 0,clip,width=0.47\textwidth]{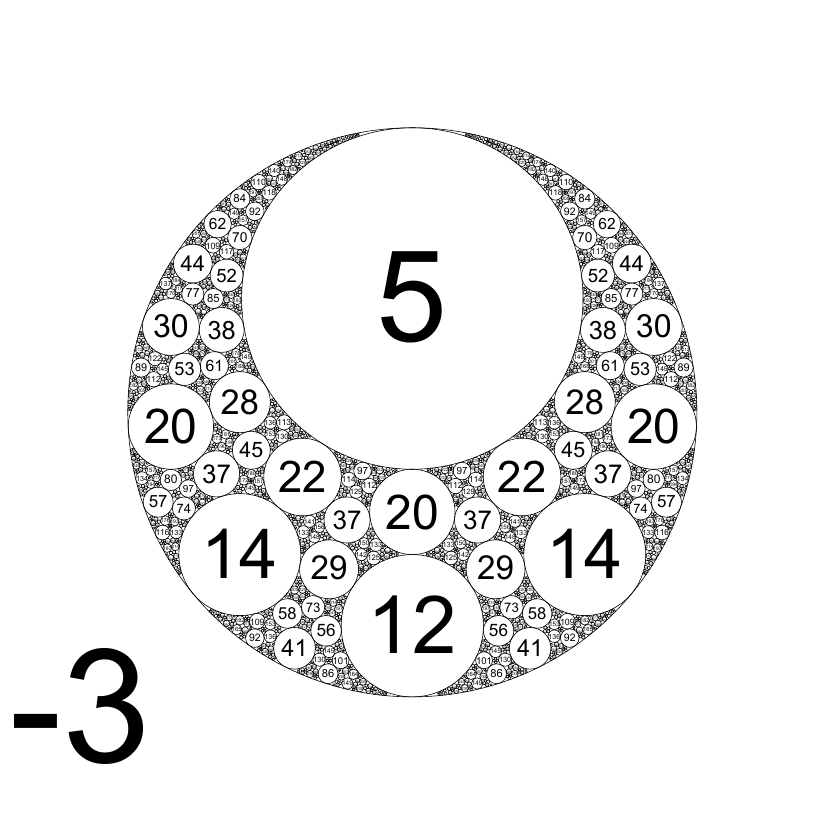}	}; 
		\end{scope}
	\end{tikzpicture}
	\caption{The integral cubic crystallographic packings $\mathscr P_{\{4,3\}}(0,0,1)$  (left) and $\mathscr P_{\{4,3\}}(-3,12,20)$ (right).	}
	\label{fig:apocube}
\end{figure}

\subsection{Icosahedron $\{3,5\}$} Icosahedral circle packings were studied by Bolt, Butler and Hovland as a particular case of \textit{Apollonian ring packings} \cite{aporingpacks}. In Figure \ref{fig:canonicalico}, we show three icosahedral circle packings obtained by the arrangement projections of three canonical icosahedra. The canonical length is $1/\varphi$.

\begin{figure}[H]
	\begin{tikzpicture}
		\begin{scope}[yshift=3.2cm]
			\node at (-5.75,0) {\includegraphics[trim=0 0 0 70,clip,align=c,width=4.4cm]{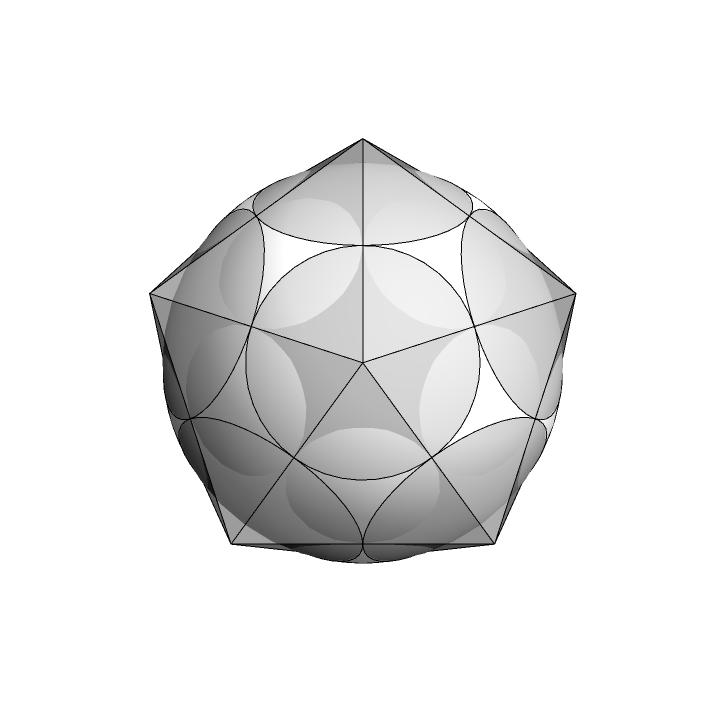} };
			\node at (0,0) {\includegraphics[trim=0 0 0 70,clip,align=c,width=4.4cm]{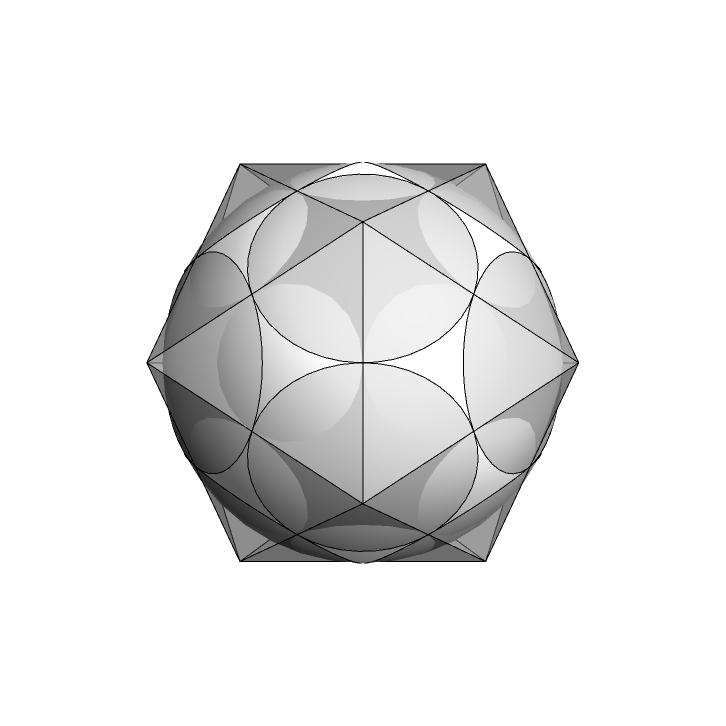}  };
			\node at (5.75,0) {\includegraphics[trim=0 0 0 70,clip,align=c,width=4.4cm]{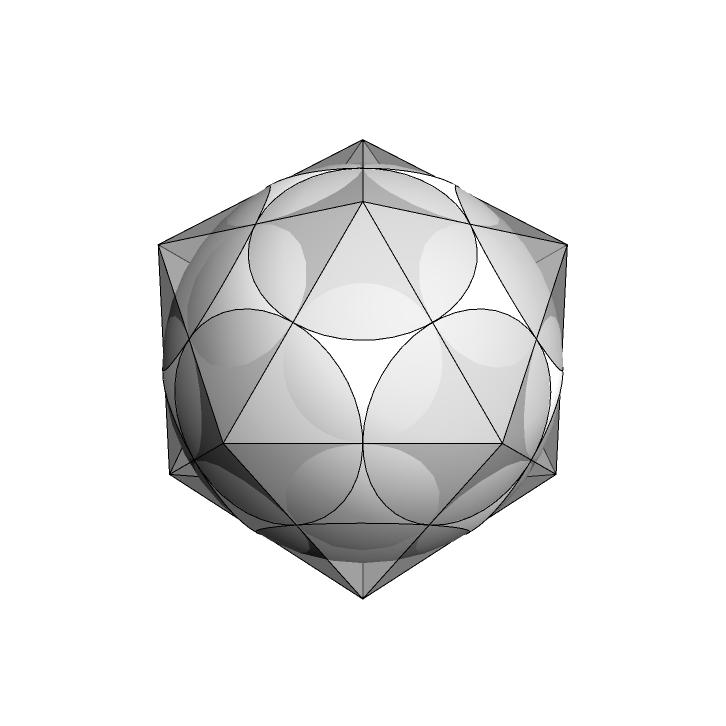}  };
		\end{scope}
		\begin{scope}
			\node at (-5.75,0) {\includestandalone[align=c,height=4cm]{tikzs/projections/VCicosahedron}};
			\node at (0,0) {\includestandalone[align=c,height=4cm]{tikzs/projections/ECicosahedron} };
			\node at (5.75,0) {\includestandalone[align=c,height=4cm]{tikzs/projections/FCicosahedron}};
		\end{scope}
	\end{tikzpicture}
	\caption{
			(Top figures, from left to right) A canonical icosahedron with its spherical illuminated regions viewed from above. The images show three orientations: a vertex, an edge, and a face centred at the North Pole. (Bottom figures) The corresponding icosahedral circle packings given by the arrangement projection.
		}
	\label{fig:canonicalico}	
\end{figure}

The Gramian can be computed from Table \ref{tab:distico} and the Möbius spectrum is $(-12\varphi^2_{(1)},0_{(8)},4(1+\varphi^2)_{(3)})$.
\begin{table}[H]
	\begin{tabular}{c|cccccc}
		\rule[-1ex]{0pt}{2.5ex} Graph distance & 0 & 1 & 2 & 3 \\
		\hline
		\rule[-1ex]{0pt}{2.5ex} Inversive product & 1 & $-1$ & $1-2\varphi^2$ & $-1-2\varphi^2$ \\
	\end{tabular}
	\caption{The inversive product compared to the graph-distance of icosahedral circle packings.}
	\label{tab:distico}
\end{table}

The linear representation of the full symmetry group $\Gamma_{\{3,5\}}<\mathrm{SL}_4(\mathbb Z[\varphi])$ is generated by the matrices
\begin{align*}
	&\mathbf{R}_1=\left(\begin{array}{cccc}
		0 & 1 & 0 & 0 \\
		1 & 0 & 0 & 0 \\
		0 & 0 & 1 & 0 \\
		\varphi  & -\varphi  & 0 & 1 \\
	\end{array}\right) 
	&\mathbf{R}_2=\left(\begin{array}{cccc}
		1& 0 & 0 & 0 \\
		0 &0  &1  &0  \\
		0	&1 & 0 & 0 \\
		0	&0  &0  & 1
	\end{array}\right)\\
	&
	\mathbf{R}_3=\left(\begin{array}{cccc}
		1 & 0 & 0 & 0 \\
		0 & 1 & 0 & 0 \\
		\varphi  & 0 & -\varphi  & 1 \\
		1 & 0 & -\varphi  & \varphi  \\
	\end{array}\right) &
	\mathbf{S}_f=\left(\begin{array}{cccc}
		1& 0 & 0 & 0 \\
		0 	&1  &0  &0  \\
		0	&0  & 1 & 0 \\
		2	&2\varphi^2  & 2\varphi^2 & -1
	\end{array}\right)
\end{align*}

The matrix of the icosahedral quadratic form is 
\begin{align}\mathbf Q_{\{3,5\}}=
	\left(
	\begin{array}{cccc}
		1 & -\varphi ^2 & -\varphi ^2 & -1 \\
		-\varphi ^2 & \varphi ^4 & \varphi & -\varphi ^2 \\
		-\varphi ^2 & \varphi & \varphi ^4 & -\varphi ^2 \\
		-1 & -\varphi ^2 & -\varphi ^2 & 1 \\
	\end{array}
	\right)
\end{align}
which implies the following relations on any fundamental bend vector $\mathbf b=(b_1,b_2,b_3,b_4)^\top$ of an icosahedral circle packing $\S_\P$:
\begin{enumerate}[-]
	\item (Icosahedral Descartes' theorem) 
\begin{align}
	(b_1+b_2+b_3+b_4)^2= 2(b_1^2+b_2^2+b_3^2+b_4^2)-\varphi^{-1}(b_1-b_4)^2+\varphi(b_2+b_3)^2	
\end{align}
	\item (Consecutive fundamental bases) If $(b_2,b_3,b_4,b_5)^\top$ is the fundamental bend vector of $\S_\P$ whose first three entries are the last three of $\mathbf b$, then
	\begin{align}
		b_5=b_1-\varphi b_2+\varphi b_4
	\end{align}
	\item (Dual inversion) if $\mathbf b'=(b_1,b_2,b_3,b_4')^\top$ is the fundamental bend vector of $\S'_\P$ obtained from $\S_\P$ after applying the inversion through the dual sphere which is orthogonal to the circles corresponding to $(b_1,b_2,b_3)$, then
	\begin{align}
		b_4'=2b_1+2(1+\varphi)b_2+2(1+\varphi)b_3-b_4
	\end{align}
	\item  (Integrality condition) if $b_1,b_2,b_3,\Delta_{\{3,5\}}\in\mathbb Z[\varphi]$ where
	\begin{align}\label{eq:intcube}
		\Delta_{\{3,5\}}=(1+\varphi)(b_1b_2+b_2b_3+b_3b_1)
	\end{align}
	then the icosahedral crystallographic packing $\mathscr P_{\{3,5\}}(b_1,b_2,b_3)$ is integral. 
		The primitive triples satisfying the previous condition are paremetrized by $(b_1,b_2,b_3)=\frac 1g(h_1, h_2, h_3)$ where
	\begin{align}
		h_1=\varphi^2t_2(t_2+t_3),&&h_2=\varphi^2t_3(t_2+t_3),&&h_3=t_1^2-\varphi^2t_2t_3
	\end{align}
	 $t_1,t_2,t_3$ are three coprime integers in $\mathbb Z[\varphi]$ and $g=\mathrm{gcd}(\varphi^2t_1(t_2+t_3),h_1,h_2,h_3)$. The two integral packings of Figure  \ref{fig:apoico} are generated by taking $(t_1,t_2,t_3)=(1,0,0),
	(1,2/\varphi,-4/\varphi)$.
\end{enumerate}

\begin{figure}[H]
	\centering
	\begin{tikzpicture}
		\begin{scope}
			\node at (0,0) {\includegraphics[align=c,trim=0 80 0 80,clip,width=0.53\textwidth]{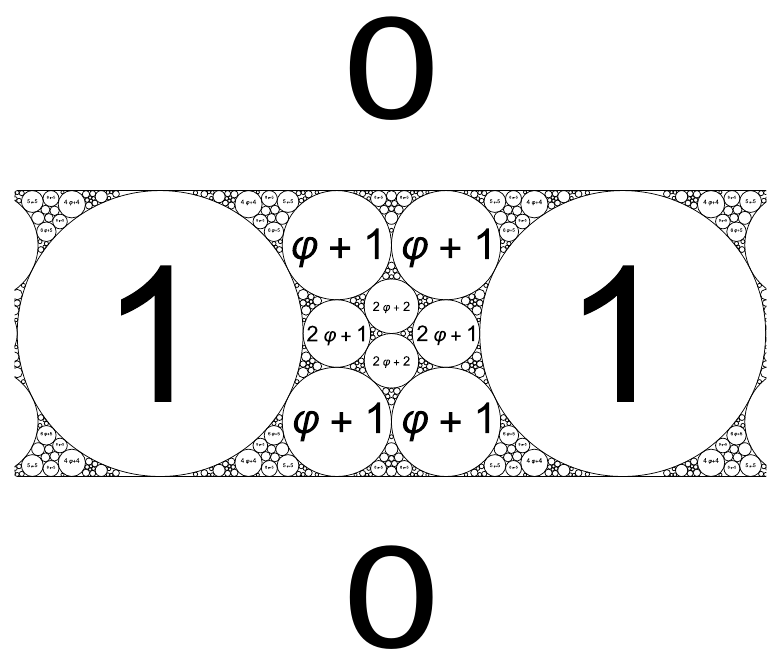}	}; 
		\end{scope}
		\begin{scope}[xshift=8cm,yshift=.0cm]
			\clip (0,0)  circle (.2\textwidth) ;
			\node[anchor=center] at (-.5,0) {	\includegraphics[align=c,trim=0 20 40 60,clip,width=0.45\textwidth]{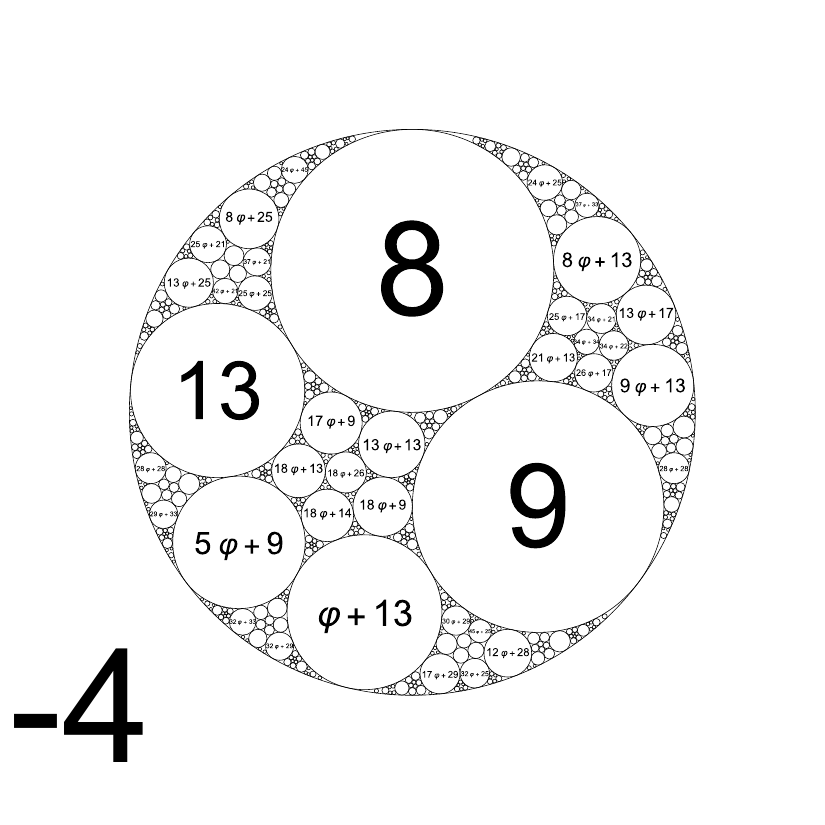}		}; 
		\end{scope}
	\end{tikzpicture}
		\vspace{-.5cm}
	\caption{The $\mathbb Z[\varphi]$-integral icosahedral crystallographic packings $\mathscr P_{\{3,5\}}(0,0,1)$ (left) and $\mathscr P_{\{3,5\}}(-4,8,9)$ (right).	}
	\label{fig:apoico}
\end{figure}

\subsection{Dodecahedron $\{5,3\}$}
In Figure \ref{fig:canonicaldode} we show three dodecahedral circle packings obtained by the arrangement projections of canonical dodecahedra. The canonical length is $1/\varphi^2$.
\begin{figure}[H]
	\begin{tikzpicture}
		\begin{scope}[yshift=3.1cm]
			\node at (-5.75,0) {\includegraphics[trim=0 0 0 70,clip,align=c,width=4.4cm]{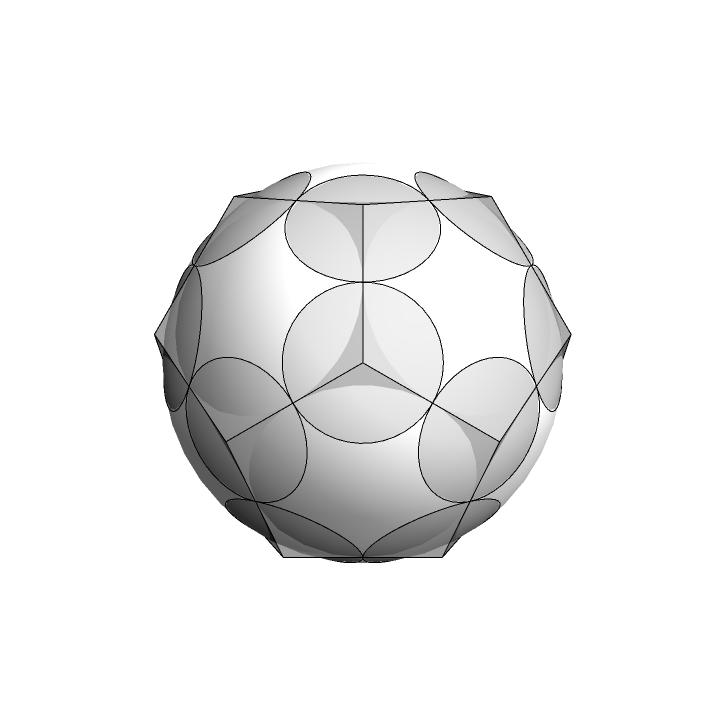} };
			\node at (0,0) {\includegraphics[trim=0 0 0 70,clip,align=c,width=4.4cm]{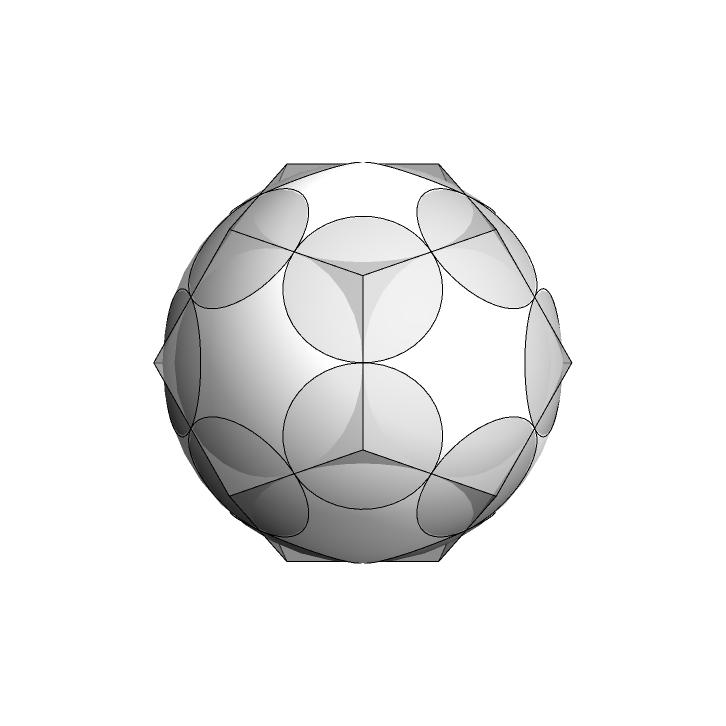}  };
			\node at (5.75,0) {\includegraphics[trim=0 0 0 70,clip,align=c,width=4.4cm]{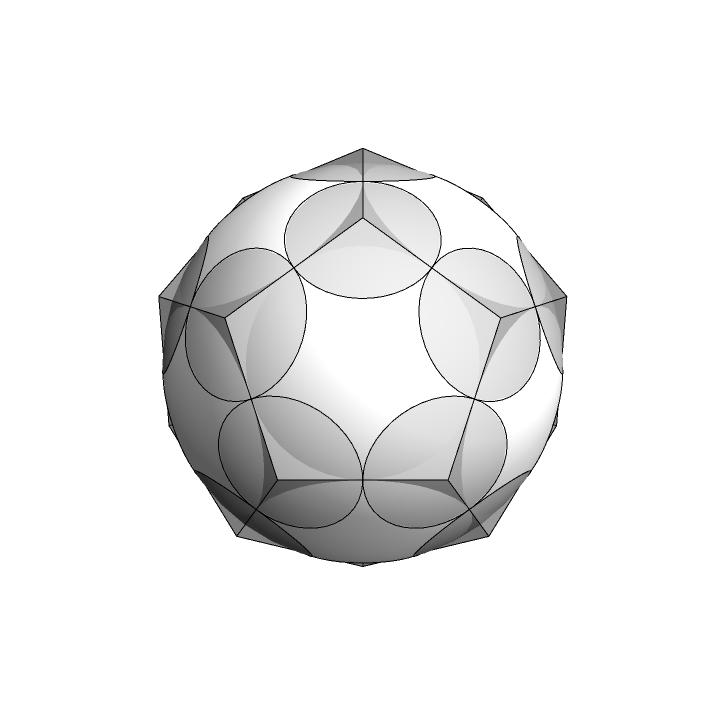}  };
		\end{scope}
		\begin{scope}
			\node at (-5.75,0) {\includestandalone[align=c,height=4cm]{tikzs/projections/VCdodecahedron}};
			\node at (0,0) {\includestandalone[align=c,height=4cm]{tikzs/projections/ECdodecahedron} };
			\node at (5.75,0) {\includestandalone[align=c,height=4cm]{tikzs/projections/FCdodecahedron}};
		\end{scope}
	\end{tikzpicture}
	\vspace{-.5cm}
	\caption{
			(Top figures, from left to right) A canonical dodecahedron with its spherical illuminated regions viewed from above. The images show three orientations: a vertex, an edge, and a face centred at the North Pole. (Bottom figures) The corresponding dodecahedral circle packings given by the arrangement projection.
		}
	\label{fig:canonicaldode}	
\end{figure}

The Gramian can be computed from Table \ref{tab:distdode} and the Möbius spectrum is $(-20\varphi^4_{(1)},0_{(16)},20\varphi^2_{(3)})$.
\begin{table}[H]
	\begin{tabular}{c|cccccc}
		\rule[-1ex]{0pt}{2.5ex} Graph distance & 0 & 1 & 2 & 3 & 4 & 5 \\
		\hline
		\rule[-1ex]{0pt}{2.5ex} Inversive product & 1 & $-1$ & $1-2\varphi^2$ & $1-4\varphi^2$ & $1-2\varphi^4$ & $1-6\varphi^2$ \\
	\end{tabular}
	\caption{The inversive product compared to the graph-distance for dodecahedral circle packings.}
	\label{tab:distdode}
\end{table}

The linear representation of the full symmetry group $\Gamma_{\{5,3\}}<\mathrm{SL}_4(\mathbb Z[\varphi])$ is generated by the matrices
\begin{align*}
	&\mathbf{R}_1=
	\left(\begin{array}{cccc}
		0 & 1 & 0 & 0 \\
		1 & 0 & 0 & 0 \\
		\varphi  & -\varphi  & 1 & 0 \\
		\varphi ^2 & -\varphi ^2 & 0 & 1 \\
	\end{array}\right) 
	&
	\mathbf{R}_2=\left(\begin{array}{cccc}
		1 & 0 & 0 & 0 \\
		\varphi  & -\varphi  & 1 & 0 \\
		1 &-\varphi & \varphi  & 0 \\
		\varphi  & -\varphi ^2 & 1 & 1 \\
	\end{array}\right) \\
	&		
	\mathbf{R}_3=\left(\begin{array}{cccc}
		1 & 0 & 0 & 0 \\
		0 & 1 & 0 & 0 \\
		0 & \varphi  & -\varphi  & 1 \\
		0 & 1 & -\varphi & \varphi  \\
	\end{array}\right)  
	&\mathbf{S}_f=\left(\begin{array}{cccc}
		1& 0 & 0 & 0 \\
		0 	&1  &0  &0  \\
		0	&0  & 1 & 0 \\
		2	&2\varphi^{-1}  & 2\varphi^2 & -1
	\end{array}\right)
\end{align*}

The matrix of the dodecahedral quadratic form is 
\begin{align}\mathbf Q_{\{5,3\}}=
	\left(
	\begin{array}{cccc}
		1 & -\varphi ^2 & \varphi ^{-1} & -1 \\
		-\varphi ^2 & \varphi ^4 & -1-\varphi ^2 & \varphi ^{-1} \\
		\varphi ^{-1} & -1-\varphi ^2 & \varphi ^4 & -\varphi ^2 \\
		-1 & \varphi ^{-1} & -\varphi ^2 & 1 \\
	\end{array}
	\right)
\end{align}
which implies the following relations on any fundamental bend vector $\mathbf b=(b_1,b_2,b_3,b_4)^\top$ of a dodecahedral circle packing $\S_\P$:
\begin{enumerate}[-]
	\item (Dodecahedral Descartes' theorem) 
\begin{align}
	\begin{split}
		\varphi^{-1}((b_1+b_3)^2+(b_2+b_4)^2) +&(b_1-b_4)^2+(b_2-b_3)^2	\\
		+\varphi^2&\left((b_1-b_2)^2+(b_2-b_3)^2+(b_3-b_4)^2\right) =2\varphi(b_1+b_4)^2
	\end{split}
\end{align}
	\item (Face relation) if $b_1,b_2,b_3,b_4$ are the bends of four circles of  $\S_\P$ corresponding to four consecutive vertices in a pentagonal face, then
	\begin{align}
		b_4=b_1-\varphi b_2+\varphi b_3
	\end{align}
	\item (Consecutive fundamental bases) If $(b_2,b_3,b_4,b_5)^\top$ is the fundamental bend vector of $\S_\P$ whose first three entries are the last three of $\mathbf b$, then
	\begin{align}
		b_5=b_1-\varphi b_2+-\varphi b_4
	\end{align}
	\item (Dual inversion) if $\mathbf b'=(b_1,b_2,b_3,b_4')^\top$ is the fundamental bend vector of $\S'_\P$ obtained from $\S_\P$ after applying the inversion through the dual sphere which is orthogonal to the circles corresponding to $(b_1,b_2,b_3)$, then
	\begin{align}
		b_4'=2b_1+2(1-\varphi) b_2+2(1+\varphi)b_3-b_4
	\end{align}
	\item  (Integrality condition) if $b_1,b_2,b_3,\Delta_{\{5,3\}}\in\mathbb Z$ where
	\begin{align}\label{eq:intcube}
		\Delta_{\{5,3\}}=b_1b_2+b_2b_3+b_3b_1-\varphi b_2^2
	\end{align}
	then the dodecahedral crystallographic packing $\mathscr P_{\{5,3\}}(b_1,b_2,b_3)$ is integral. The primitive triples satisfying the previous condition are paremeterized by $(b_1,b_2,b_3)=\frac 1g(h_1, h_2, h_3)$ where
	\begin{align}
		h_1=t_2(t_2+t_3),&&h_2=t_3(t_2+t_3),&&h_3=t_1^2-t_2t_3+\varphi t_3^2
	\end{align}
$t_1,t_2,t_3$ are three coprime integers in $\mathbb Z[\varphi]$ and $g=\mathrm{gcd}(t_1(t_2+t_3),h_1,h_2,h_3)$. 	The two integral packings of Figure  \ref{fig:apodode} are generated by taking $(t_1,t_2,t_3)=(1,0,0),(1,\varphi+1,-1)\}$.
\end{enumerate}

\begin{figure}[H]
	\centering
	\begin{tikzpicture}
		\begin{scope}
			\node at (0,0) {\includegraphics[trim=0 100 0 100,clip,width=0.4\textwidth,align=c]{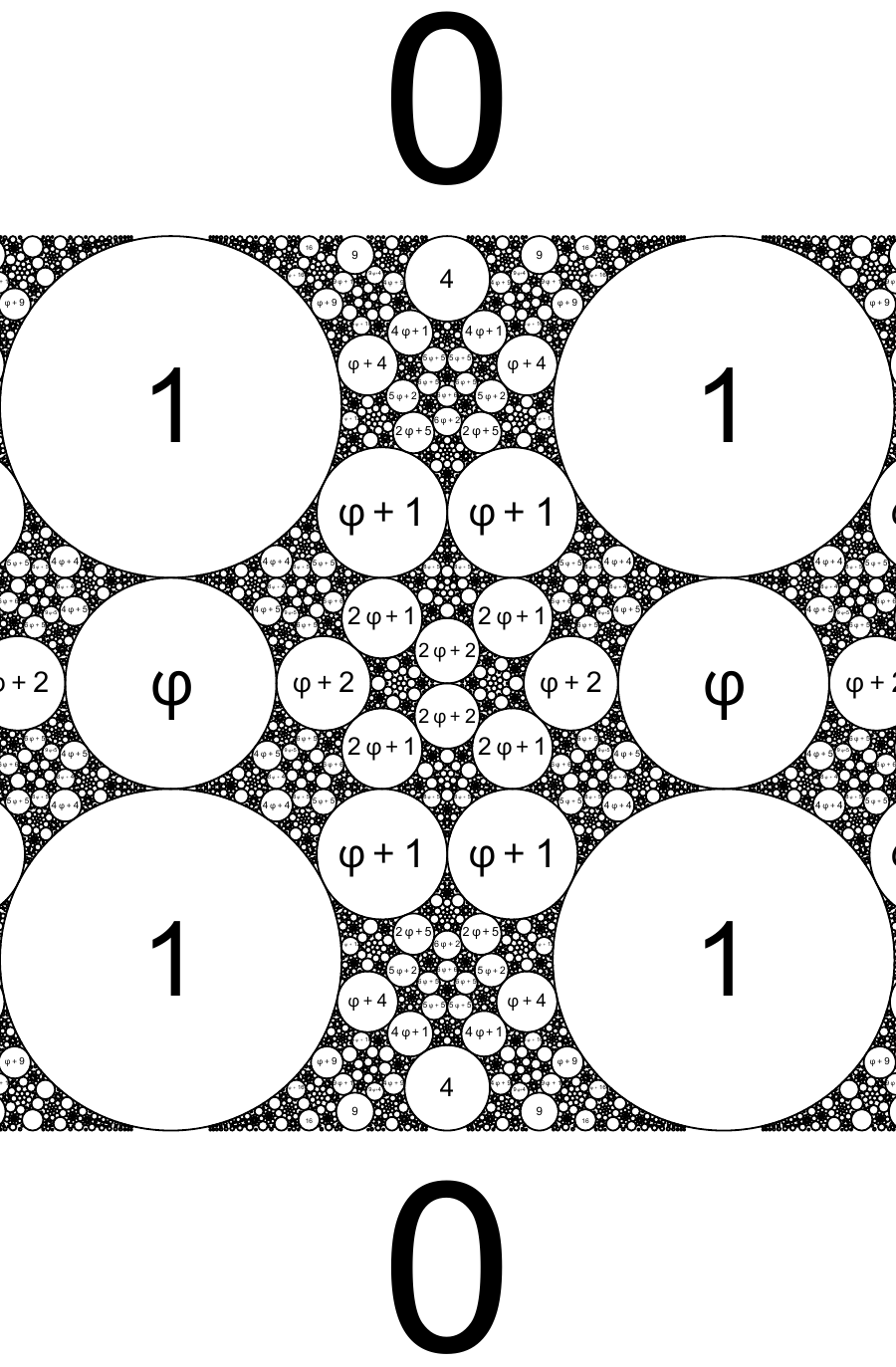}	}; 
		\end{scope}
		\begin{scope}[xshift=8cm,yshift=0cm]
			\clip (0,0)  circle (.2\textwidth) ;
			\node[anchor=center] at (-.5,-.5) {		\includegraphics[width=0.48\textwidth,align=c]{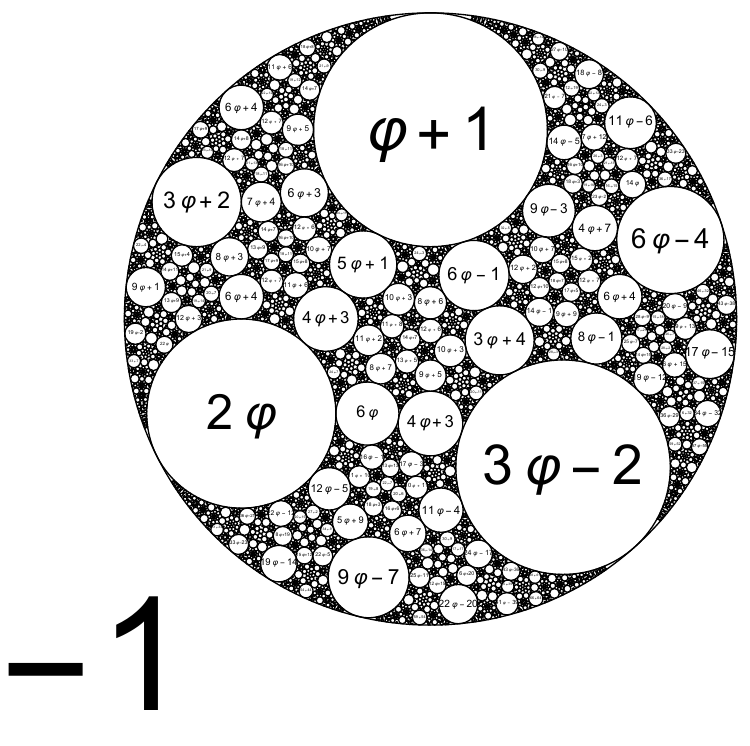}	}; 
		\end{scope}
	\end{tikzpicture}
	\caption{The $\mathbb Z[\varphi]$-integral dodecahedral crystallographic packings $\mathscr P_{\{5,3\}}(0,0,1)$  (left) and $\mathscr P_{\{3,5\}}(\varphi+1,-1,2\varphi)$. (right).	}
	\label{fig:apodode}
\end{figure}

\end{document}